\documentclass[11pt]{amsart}

\usepackage[T1]{fontenc}
\usepackage[utf8]{inputenc}
\usepackage{lmodern}
\usepackage{amsmath,amssymb,amsthm,mathtools}
\usepackage{graphicx}
\usepackage{microtype}
\usepackage[colorlinks=true,linkcolor=blue,citecolor=blue,urlcolor=blue]{hyperref}
\usepackage[margin=1.15in]{geometry}
\usepackage{enumitem}

\graphicspath{{./}}

\newtheorem{theorem}{Theorem}[section]
\newtheorem{lemma}[theorem]{Lemma}
\newtheorem{proposition}[theorem]{Proposition}
\newtheorem{corollary}[theorem]{Corollary}

\newtheorem*{conjecture*}{Conjecture}
\theoremstyle{remark}
\newtheorem{remark}[theorem]{Remark}

\newcommand{\D}{\mathbb D}
\newcommand{\T}{\mathbb T}

\newcommand{\ii}{\mathrm i}
\newcommand{\dd}{\mathrm d}
\newcommand{\eps}{\varepsilon}
\newcommand{\zc}{z_\circ}
\newcommand{\Dc}{D_\circ}

\numberwithin{equation}{section}

\title[The Nitsche--Hopf conjecture for minimal graphs]{The Nitsche--Hopf Conjecture for Minimal Graphs}

\author{David Kalaj}
\address{Faculty of Natural Sciences and Mathematics, University of Montenegro, Podgorica, Montenegro}
\email{davidk@ucg.ac.me}

\author{Jian-Feng Zhu}
\address{Department of Mathematics, Shantou University, Shantou, Guangdong 515063, P. R. China}
\email{flandy@stu.edu.cn}

\date{May 2026}

\begin{document}

\begin{abstract}
We prove the Nitsche--Hopf conjecture for non-parametric minimal graphs over
disks.  If \(S\) is a minimal graph over a disk of radius \(R\), and if
\(\xi\) is the point above the center, then
\[
        W(\xi)^2 |K(\xi)|<\frac{\pi^2}{2R^2}.
\]
Here \(K\) is the Gaussian curvature and
\[
        W=\sqrt{1+|\nabla u|^2}=\frac1{n_3}
\]
is the reciprocal of the vertical component of the upward unit normal.  The constant is sharp, as shown by the horizontal tangent-plane extremal
sequence of Finn and Osserman.

The main difficulty is that the bicentric-quadrilateral comparison theorem
for Gaussian curvature controls \(|K|\), but it does not by itself control
the normalized quantity \(W^2|K|\): the slope factor \(W\) can be arbitrarily
large.  We show that the missing information is recovered inside the
Scherk-type comparison family from the zero equation for the horizontal
harmonic projection.  More precisely, in the fixed-arc normalization the
point corresponding to the center of the physical disk is a distinguished
zero \(z_\circ\) of the harmonic projection.  The equation
\(f(z_\circ)=0\), written in harmonic-measure coordinates, reduces the sharp
Hopf estimate to a scalar derivative inequality at the admissible zero of a
monotone function \(G_{A,B}\).  We prove this scalar inequality on the full
admissible parameter domain by a barrier argument and two explicit
Bernstein-polynomial positivity certificates.

Combined with the bicentric-quadrilateral comparison theorem of the first author and
Melentijevi\'c, the Scherk-family estimate gives the sharp
normalized Hopf estimate for arbitrary minimal graphs over disks.  As a
byproduct, we obtain the two-sided bound
\[
        \frac{\pi^2}{4}\le W^2|K|\leq \frac{\pi^2}{2}
\]
throughout the normalized Scherk-type comparison family, evaluated at the
distinguished point corresponding to the center. 
\end{abstract}

\maketitle

\section{Introduction and main results}

A classical problem in the theory of non-parametric minimal surfaces asks for
the sharp curvature bound at the center of a minimal graph over a disk.  Let
\[
        \Sigma=\{(x,y,u(x,y)):(x,y)\in D_R\}\subset\mathbb R^3
\]
be a solution of the minimal surface equation over the disk of radius \(R\).
Its Gaussian curvature is
\[
        K=\frac{u_{xx}u_{yy}-u_{xy}^2}
        {(1+u_x^2+u_y^2)^2},
\]
and the inclination factor is
\[
        W=\sqrt{1+|\nabla u|^2}.
\]
Equivalently, \(W=1/n_3\), where \(n_3\) is the vertical component of the
upward unit normal.  For general background on minimal surfaces and harmonic
mappings, see \cite{AlarconForstnericLopez2021,Duren2004,Nitsche1975}; related
sharp-curvature estimates are discussed in \cite{Hall1998}.

Heinz proved that the curvature at the center satisfies
\[
        |K(0)|\leq \frac{C}{R^2}
\]
for an absolute constant \(C\); see \cite{Heinz1952}.  Hopf later refined this result by showing that the
correct scale-invariant quantity is not merely \(R^2|K(0)|\), but rather
\[
        R^2W(0)^2|K(0)|.
\]
More precisely, Hopf proved \cite{Hopf1953} that
\[
        W(0)^2|K(0)|\leq \frac{C_1}{R^2}
\]
for an absolute constant \(C_1\).  The remaining problem is to determine the sharp value of this universal
constant.

Finn and Osserman proved that the sharp constant is \(\pi^2/2\) in the
horizontal tangent-plane case, that is, when \(W(0)=1\)
\cite{FinnOsserman1964}.  Nitsche observed that the same constant should
remain sharp for arbitrary inclination and verified this assertion in several
symmetric classes \cite{Nitsche1973}.  This led to the conjecture that
\[
        R^2W(0)^2|K(0)|<\frac{\pi^2}{2}
\]
for every minimal graph over a disk, with \(\pi^2/2\) being the best possible
constant.  We shall refer to this sharp-constant conjecture as the
{\it Nitsche--Hopf conjecture}.

The purpose of this paper is to prove the Nitsche--Hopf conjecture.

\begin{theorem}[Normalized Hopf curvature estimate]\label{thm:main}
Let \(S\) be a non-parametric minimal graph over the disk of radius \(R\),
and let \(\xi\) be the point of \(S\) above the center of the disk.  Then
\[
        W_S(\xi)^2 |K_S(\xi)|<\frac{\pi^2}{2R^2}.
\]
The constant \(\pi^2/2\) is sharp.
\end{theorem}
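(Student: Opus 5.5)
We follow the route sketched in the abstract. First we reduce to the unit disk by scaling, since \(R^2W^2|K|\) is scale invariant. We then pass to the Weierstrass--Enneper representation. The horizontal projection of \(S\) is a univalent harmonic map \(f=h+\overline g\) from \(\D\) onto the disk \(D_1\), and the Gauss map is encoded by the dilatation \(\omega=g'/h'\), after a rotation making the third coordinate correspond to \(\ii\sqrt{h'g'}\). We normalize so that the point \(\zc\in\D\) with \(f(\zc)=0\) lies over \(\xi\). With \(W=(1+|\omega|)/(1-|\omega|)\) and the standard formula
\[
|K|=\frac{4|\omega'|^2}{|h'|^2|\omega|(1+|\omega|)^4},
\]
the target quantity becomes
\[
W^2|K|=\frac{4|\omega'(\zc)|^2}{|h'(\zc)|^2|\omega(\zc)|(1-|\omega(\zc)|^2)^2}.
\]
This is an explicit expression in the data of \(f\) at \(\zc\).

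Second, we invoke the bicentric-quadrilateral comparison theorem of the first author and Melentijevi\'c. It reduces the extremal problem for \(|K|\) at the center, at fixed value of the normal, to the Scherk-type family. In that family \(f\) is the harmonic map of \(\D\) onto a quadrilateral inscribed in the unit circle, given by Poisson extension of a step function on four arcs, and \(\omega\) is a Blaschke-type quotient determined by the arc parameters. We apply the comparison theorem with the normal \(n(\xi)\), and hence \(W(\xi)\), held fixed. Then \(|K_S(\xi)|\le |K_{S_*}(\xi_*)|\) for a Scherk-type comparison surface \(S_*\) with the same \(W\). It therefore suffices to prove \(W^2|K|<\pi^2/2\) inside the comparison family, at the distinguished zero.

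Third, we parametrize the family in the fixed-arc normalization. The four vertices are placed at fixed boundary points, and the free parameters are the image quadrilateral's angles, say \(A,B\). The zero \(\zc\) of \(f\) is then a point determined by the equation \(f(\zc)=0\). We write \(f\) through harmonic measures of the four arcs. This turns \(f(\zc)=0\) into two scalar equations, and after eliminating one coordinate by symmetry and monotonicity, into \(G_{A,B}(t)=0\) for a monotone function \(G_{A,B}\) of one real variable. We then compute \(\omega,\omega',h'\) at \(\zc\) explicitly. The inequality \(W^2|K|\le\pi^2/2\) becomes a derivative inequality of the form \(\Phi(A,B,t_0)\le 0\), where \(t_0\) is the admissible zero of \(G_{A,B}\).

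The main obstacle is this last scalar inequality, which must hold uniformly on the full admissible \((A,B)\)-domain, including degenerate limits where \(W\to\infty\). My first attempt would be a barrier argument. I would find an explicit function \(t=\beta(A,B)\) with \(G_{A,B}(\beta)\) of known sign, so that monotonicity places \(t_0\) on one side of \(\beta\), and then show that \(\Phi\) has the right sign on that entire side. After trigonometric substitution the remaining inequalities become polynomial, and I would certify their positivity on a box via Bernstein coefficients, subdividing if necessary.

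Strictness follows because the boundary of the domain, which corresponds to the degenerate quadrilateral, is never attained by a genuine graph. The lower bound \(\pi^2/4\) emerges from the same computation.

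Sharpness comes from the Finn--Osserman sequence. These Scherk-type surfaces have horizontal tangent plane and \(R^2|K(0)|\to\pi^2/2\) as the quadrilateral degenerates, with \(W(0)=1\).
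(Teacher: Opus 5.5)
Your overall route is the paper's: scale to $R=1$, apply the Kalaj--Melentijevi\'c comparison at fixed normal so that $W(\xi)$ is shared, reduce inside the fixed-arc Scherk family to the admissible zero of $G_{A,B}$, and prove a derivative inequality by a barrier argument and Bernstein certificates. \textbf{The gap is strictness.} You write the comparison as $|K_S(\xi)|\le|K_{S_*}(\xi_*)|$ and plan to prove $W^2|K|<\pi^2/2$ strictly inside the comparison family, on the grounds that only degenerate quadrilaterals could give equality. That is false, because equality is attained by a genuine member of the family. For $w=0$ the comparison graph is Scherk's surface over the inscribed square: $A=B=1$, $\kappa=\eps=0$, $U=1/2$, $\zc=0$, $W=1$. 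There the scalar inequality reads $2\sin(\pi/2)=\sqrt{2(1+1)}$, so $W^2|K|=\pi^2/2$ exactly; equivalently $4/|(f^\diamond)_z(0)|^2$ with $|(f^\diamond)_z(0)|^2=8/\pi^2$. This is forced: if the comparison value at $w=0$ were below $\pi^2/2$, the comparison theorem would beat the sharp Finn--Osserman constant. So the family estimate is only $\le\pi^2/2$, and with a non-strict comparison your chain gives only $W_S(\xi)^2|K_S(\xi)|\le\pi^2/2$. The strict inequality has to come, as in the paper, from the strict form $|K_S(\xi)|<|K_{S^\diamond}(\xi)|$ of the comparison theorem, multiplied by the common factor $W(\xi)^2$. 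Applying that theorem also requires first rotating horizontally so that $U_{uv}(0,0)=0$, which you omit.

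\textbf{Sharpness.} Your sharpness sentence is confused. Scherk-type graphs live over inscribed quadrilaterals, not over the disk, so they are not competitors. Nothing degenerates either: the square already gives $\pi^2/2$. Sharpness for disk graphs comes from the Finn--Osserman sequence of graphs over the disk with $W(0)=1$ and $|K(0)|\to\pi^2/2$, which is all the paper invokes.

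\textbf{Two smaller points.} First, with $\omega=q^2$ one has $|K|=|\omega'|^2/(|h'|^2|\omega|(1+|\omega|)^4)$. Your formula carries a spurious factor $4$, which would be fatal in a sharp-constant computation if propagated. Second, the single scalar equation does not arise ``by symmetry and monotonicity.'' The complex equation $f(\zc)=0$ makes $V$ and $T$ affine in $U$. The remaining equation $G_{A,B}(U)=0$ is the harmonic-measure cross-ratio identity $\sin(\pi\Omega_1)\sin(\pi\Omega_3)=\tan^2(\alpha/2)\sin(\pi\Omega_2)\sin(\pi\Omega_4)$. The Hopf inequality becomes exactly $G'_{A,B}(U)/\pi\ge\sqrt{2(1+AB)}$ only after the phase $\cos(t_\circ-\delta)$ in $\Dc$ is eliminated, using the arc harmonic-measure formulas and the explicit argument of $a$. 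These are the decisive steps your plan leaves unspecified.
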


The strict inequality in Theorem~\ref{thm:main} is inherited from the strict
form of the comparison theorem used below.  The sharpness follows from the
horizontal tangent-plane extremal sequence of Finn and Osserman.

The main difficulty is that the normalized problem is genuinely stronger
than the corresponding problem for \(|K|\).  A pointwise estimate
\[
        |K(\xi)|\leq \frac{C}{R^2}
\]
does not give useful information on \(W(\xi)^2|K(\xi)|\), since the slope
factor \(W(\xi)\) can be arbitrarily large.  In conformal coordinates this
obstruction is particularly transparent.  If \(g\) denotes the Gauss-map
parameter of an upward oriented minimal graph, then
\[
        W=\frac{1+|g|^2}{1-|g|^2}.
\]
Thus \(W\) becomes large as the Gauss map approaches the unit circle.  The
Hopf problem therefore requires information which is not contained in a
comparison estimate for \(|K|\) alone.

The starting point of the proof is the bicentric-quadrilateral comparison
theorem of Kalaj and Melentijevi\'c \cite{KalajMelentijevic2026}.  That
theorem reduces sharp curvature questions for arbitrary disk graphs to a
two-parameter family of Scherk-type minimal graphs over bicentric
quadrilaterals.  The corresponding admissible parameter set
\[
        \mathcal S\subset\{(p,q):0<p<q<\pi\}
\]
was determined in \cite{KalajMelentijevic2026}; its shape is shown in
Figure~\ref{fig:domain}.  Each point \((p,q)\in\mathcal S\) determines a
Scherk-type comparison graph.

For the unnormalized curvature, the comparison theorem is sufficient: after
a normalization and a horizontal rotation, the curvature of a given minimal
graph at the point above the center is strictly dominated by the curvature
of the corresponding Scherk-type comparison graph.  For the normalized
quantity one additional fact is decisive.  The comparison graph has the same
unit normal at the distinguished point as the original graph.  Hence, the two
graphs have the same value of \(W\) at that point.  Therefore, the strict
curvature comparison transfers immediately to \(W^2|K|\), provided one proves
the sharp normalized estimate inside the Scherk-type comparison family.

This Scherk-family estimate is the new ingredient of the paper.

\begin{theorem}[Scherk-type normalized Hopf estimate]\label{thm:scherk-main}
For every admissible two-parameter Scherk-type comparison graph,
\[
        W^2|K|\leq \frac{\pi^2}{2}
\]
at the distinguished point corresponding to the center in the comparison
normalization.
\end{theorem}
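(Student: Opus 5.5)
We set up the Scherk-type comparison graphs through the Weierstrass--Enneper representation on the unit disk \(\D\). For an admissible \((p,q)\in\mathcal S\), the boundary circle \(\T\) is split into four arcs, with endpoints at the angles fixed by the fixed-arc normalization. The horizontal harmonic projection \(f=h+\overline{g_0}\) is the harmonic map whose boundary values are piecewise constant at the four vertices of the bicentric quadrilateral, so \(f\) is given by a Poisson-integral formula. The dilatation \(\omega=g_0'/h'\) is a Blaschke product of degree two, equal to the square of the Gauss-map parameter \(g\), say \(g=\sqrt{\omega}\) up to a rotation. In these coordinates we record the standard formulas
\[
W=\frac{1+|g|^2}{1-|g|^2},\qquad |K|=\frac{4|g'|^2}{|h'|^2(1+|g|^2)^4}.
\]
Hence \(W^2|K|=4|g'|^2/\bigl(|h'|^2(1-|g|^2)^2(1+|g|^2)^2\bigr)\). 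In this quantity the factor \((1-|g|^2)^{-2}\), which could blow up, appears explicitly. The distinguished point \(\zc\) is defined as the zero of \(f\) that corresponds to the center of the physical disk. The plan is therefore to evaluate \(W^2|K|\) at \(\zc\) in closed form as a function of \(\zc\) and of \((p,q)\), and then to eliminate \(\zc\) using the equation \(f(\zc)=0\).

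Concretely, after a Möbius normalization we write \(\zc\) in harmonic-measure coordinates. The harmonic measures of the four arcs seen from \(\zc\) give two free quantities \(A,B\), and the equation \(f(\zc)=0\), being a linear combination of the vertex positions weighted by harmonic measures, becomes a scalar equation \(G_{A,B}(t)=0\). We will show that \(G_{A,B}\) is monotone in the remaining variable \(t\), so that the admissible zero \(t_*\) is unique. Next we express \(|g'(\zc)|\), \(|h'(\zc)|\) and \(|g(\zc)|\) in terms of the same quantities. The point to show is that the small-denominator factor \(1-|g(\zc)|^2\) cancels against the derivative of the harmonic-measure map. After this cancellation, \(W^2|K|(\zc)\le\pi^2/2\) becomes a scalar inequality of the form \(\Phi(A,B)\,G_{A,B}'(t_*)\ge c\), i.e.\ a derivative inequality at the zero.

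To prove this scalar inequality on the whole admissible domain we use a barrier argument. We look for an explicit function \(\beta(A,B)\) with \(G_{A,B}(\beta)\) of a definite sign. Monotonicity of \(G_{A,B}\) then places \(t_*\) on one side of \(\beta\), and the derivative inequality reduces to the inequality evaluated at \(\beta\), which no longer involves the implicitly defined zero. Both the barrier sign condition and the reduced inequality become polynomial inequalities after clearing trigonometric terms, for instance via \(\tan\) half-angle substitutions mapping the admissible range onto a box. We then certify positivity on that box by expanding in the Bernstein basis and checking that all coefficients are nonnegative, subdividing where necessary.

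As a sanity check, the bound should be approached at the boundary of \(\mathcal S\) where the quadrilateral degenerates to the Finn--Osserman horizontal extremal, giving the value \(\pi^2/2\). The lower bound \(\pi^2/4\) should come out of the same computation. The main obstacle is the middle step: finding the right coordinates in which the zero equation absorbs the blow-up of \(W\), and choosing a barrier tight enough that the Bernstein certificates close near the degenerate corners of \(\mathcal S\), where equality is approached.
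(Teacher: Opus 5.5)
Your outline follows the paper's architecture: exact formula at $\zc$, harmonic-measure coordinates, a monotone $G_{A,B}$, a derivative inequality at its zero, barriers, Bernstein certificates. But the steps that carry the proof are missing or misdescribed, so as written it is not a proof.

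\textbf{The reduction to one variable.} $A$ and $B$ are not harmonic measures at $\zc$. They are the family parameters $A=\sin p$, $B=\sin(q-p)$, with $\tan^2(\alpha/2)=A/B$. Your count (two harmonic-measure quantities plus a further unknown $t$ cut down by one scalar equation) neither matches the two real conditions in $f(\zc)=0$ nor retains $(p,q)$.

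In the paper, $\sum_j\Omega_j(\zc)P_j=0$ is two real linear equations. Together with $\sum_j\Omega_j=1$, they make $V=\Omega_1-\Omega_3$ and $T=\Omega_4-\Omega_2$ affine in $U=\Omega_1+\Omega_3$. The scalar equation then comes from an ingredient absent from your plan, the four-arc cross-ratio identity $\sin(\pi\Omega_1)\sin(\pi\Omega_3)=\tan^2(\alpha/2)\,\sin(\pi\Omega_2)\sin(\pi\Omega_4)$. This becomes $B\cos(\pi V)-A\cos(\pi T)-(A+B)\cos(\pi U)=0$, i.e.\ $G_{A,B}(U)=0$.

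\textbf{The cancellation of $1-|g(\zc)|^2$.} You assert rather than derive that this factor ``cancels against the derivative of the harmonic-measure map,'' and that is not the mechanism. One starts from the explicit identity
\[
W^2|K|=\frac{\pi^2}{4}\,\frac{1+\mu^2}{\mu^2}\,\frac{|1-\zc^2|^2\,|\zc^2-e^{2\ii\alpha}|^2}{(1-r^2)^2\Dc^2}.
\]
The factor $1-r^2$ is absorbed by the closed-form harmonic measure of the arc $(0,2\alpha)$ at $\zc^2$:
\[
\sin(\pi U)=\frac{(1-r^4)\sin\alpha}{|1-\zc^2|\,|e^{2\ii\alpha}-\zc^2|}.
\]
The phase $\cos(t_\circ-\delta)$ in $\Dc$ is then eliminated using the single-arc formulas for $I_1,I_3$ and $I_2,I_4$, together with the explicit phase of the zero $a$ of the Gauss map (Lemma~\ref{lem:phase-a}). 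Only then does $W^2|K|=\pi^2(1+AB)/S^2$ emerge, with $S=(A+B)\sin(\pi U)+B\kappa\sin(\pi M)+A\eps\sin(\pi N)=\frac1\pi G'_{A,B}(U)$. Nothing in your plan explains why a derivative of $G_{A,B}$ should appear.

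\textbf{The barrier step fails as stated.} $S$ is not monotone in $U$; for $A=B$ it is symmetric about $U=\frac12$, which lies in the admissible interval. So knowing the zero lies on one side of a barrier $\beta$ does not reduce the inequality to its value at $\beta$.

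The paper first replaces $S$ by monotone minorants. Since $0\le M,N\le\frac12$ (Lemma~\ref{lem:domain}), the bounds $\sin(\pi x)\ge 2x$ and $\sin(\pi U)\ge 2\min\{U,1-U\}$ reduce everything to two linear estimates. The $1-U$ branch collapses exactly to $BC(P-U)$ with $C=2+AB-A^2$. This is decreasing in $U$, so it needs only the upper barrier $U\le U_*=P-x_*$, where $x_*=\sigma/(2BC)$ and $\sigma=\sqrt{2(1+AB)}$. The $U$ branch needs a lower barrier, which comes from the symmetry $A\leftrightarrow B$, $U\leftrightarrow 1-U$.

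\textbf{The polynomial reduction.} Tan half-angle substitutions cannot turn $G_{A,B}(U_*)\ge0$ into a polynomial inequality. It contains terms such as $\cos(\pi\kappa x_*)$, whose argument is $\pi$ times an algebraic function of $(A,B)$. The paper instead uses $\cos(\pi N)\le1$, $\cos(\pi y)\ge1-2y$, $\cos t\ge1-t^2/2$, $\pi^2<10$, $\sigma\le2$ and $2-\sigma\ge(1-AB)/2$. These reach the polynomial inequalities $Y\ge0$ and $Z\ge0$, which are then certified in the Bernstein basis on $[0,1]^2$.
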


The central observation is that the Scherk comparison graph is not evaluated
at an arbitrary point of its conformal parameter disk.  We use the fixed-arc
normalization of the Scherk family.  In this normalization the horizontal
projection \(f\) is explicit: it is the harmonic extension of a four-step
boundary function whose values are the vertices of the bicentric quadrilateral
determined by the admissible pair \((p,q)\in\mathcal S\).  Thus
\[
        f(z)=\sum_{j=1}^4 \Omega_j(z)P_j,
\]
where \(P_1,\ldots,P_4\) are the four vertices of the associated bicentric
quadrilateral and \(\Omega_j(z)\) are the harmonic measures of the four fixed
boundary arcs.

The point relevant for the Hopf estimate is the point of the surface lying
above the center of the physical domain.  In the fixed-arc parameter disk its
preimage is a distinguished point \(z_\circ\), characterized by
\[
        f(z_\circ)=0.
\]
This zero equation is the additional constraint which compensates for the
possible largeness of the inclination factor \(W\).

The analytic form of the problem comes from the Enneper--Weierstrass data.
If
\[
        X(z)=\operatorname{Re}\int^z
        \bigl(1-g^2,\ii(1+g^2),2g\bigr)\varphi\,\dd z,
\]
then, for an upward oriented minimal graph,
\[
        K=-\frac{4|g'|^2}{|\varphi|^2(1+|g|^2)^4},
        \qquad
        W=\frac{1+|g|^2}{1-|g|^2}.
\]
For the fixed-arc Scherk family, the explicit Weierstrass data computed in
\cite{KalajMelentijevic2026} give the following
\[
        -K
        =
        \frac{\pi^2}{4}(1+\mu^2)
        \frac{|1-z^2|^2|z^2-e^{2\ii\alpha}|^2}{D(z)^4},
\]
where
\[
        \mu^2=AB,\qquad A=\sin p,\qquad B=\sin(q-p),
\]
and \(D(z)>0\) is an explicit factor depending on the Gauss-map parameter
and on \(z\).  Since the Gauss map is a disk automorphism, one also has, at
\(z_\circ=re^{\ii t_\circ}\),
\[
        W(z_\circ)=\frac{D_\circ}{\mu(1-r^2)},
        \qquad
        D_\circ=D(z_\circ).
\]
Multiplying the curvature formula by this expression for \(W^2\) gives
\[
        W^2|K|
        =
        \frac{\pi^2}{4}\frac{1+\mu^2}{\mu^2}
        \frac{|1-z_\circ^2|^2|z_\circ^2-e^{2\ii\alpha}|^2}
        {(1-|z_\circ|^2)^2D_\circ^2}.
\]
For arbitrary \(z\in\mathbb D\), this identity alone does not imply Hopf's
sharp bound.  The decisive input is precisely the zero condition
\(f(z_\circ)=0\).

We exploit this condition by writing it in harmonic-measure coordinates.
Introduce
\[
        U=\Omega_1(z_\circ)+\Omega_3(z_\circ),
        \qquad
        V=\Omega_1(z_\circ)-\Omega_3(z_\circ),
        \qquad
        T=\Omega_4(z_\circ)-\Omega_2(z_\circ).
\]
The zero equation is a linear relation among the four harmonic measures.  In
the variables \(U,V,T\), it makes \(V\) and \(T\) affine functions of the
single variable \(U\).  The fact that these four numbers are harmonic
measures of the prescribed fixed arcs gives a cross-ratio condition, and
therefore one scalar equation for \(U\).

After eliminating the remaining phase term in \(D_\circ\), the sharp Hopf
estimate becomes a one-variable derivative inequality at an admissible zero
of an explicit monotone function.  We now state this scalar form.

Let
\[
        0<A,B\leq 1,
        \qquad
        \kappa=\sqrt{1-A^2},
        \qquad
        \varepsilon=\sqrt{1-B^2}.
\]
Set
\[
        P=P(A,B):=\frac{1+AB}{B(A+B)}.
\]
Define
\[
        M(U):=\kappa(P-U),
\]
\[
        N(U):=\varepsilon\left(U+\frac{\kappa^2}{A(A+B)}\right),
\]
and
\[
        G_{A,B}(U)
        :=
        B\cos(\pi M(U))
        -A\cos(\pi N(U))
        -(A+B)\cos(\pi U).
\]
The admissible interval is
\[
        L(A,B)\leq U\leq R(A,B),
\]
where
\[
        L(A,B)=
        \frac{\kappa}{1+\kappa}\frac{1+AB}{B(A+B)}
\]
and
\[
        R(A,B)=
        \frac{1-\varepsilon\kappa^2/[A(A+B)]}{1+\varepsilon}.
\]
This interval is nonempty precisely when
\[
        B\geq B_0(A),
\]
with
\[
        B_0(A)=
        \frac{-A(1-\kappa)+
        \sqrt{A^2(1-\kappa)^2+8\kappa(1+\kappa)}}
        {2(1+\kappa)}.
\]

\begin{theorem}[Scalar derivative inequality]\label{thm:scalar}
Assume \(0<A,B\leq 1\) and \(B\geq B_0(A)\).  Let \(U\) be an admissible
zero of \(G_{A,B}\), that is,
\[
        G_{A,B}(U)=0,
        \qquad
        L(A,B)\leq U\leq R(A,B).
\]
Then
\[
        \frac{1}{\pi}G'_{A,B}(U)
        \geq
        \sqrt{2(1+AB)}.
\]
Equivalently,
\[
\begin{aligned}
 &(A+B)\sin(\pi U)
        +B\kappa\sin(\pi M(U))
        +A\varepsilon\sin(\pi N(U))       \\
 &\hspace{4cm}\geq \sqrt{2(1+AB)}.
\end{aligned}
\]
\end{theorem}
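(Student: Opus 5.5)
First I verify the derivative formula. Since \(M'=-\kappa\) and \(N'=\varepsilon\), differentiating gives
\[
G'_{A,B}(U)=\pi\bigl[B\kappa\sin(\pi M)+A\varepsilon\sin(\pi N)+(A+B)\sin(\pi U)\bigr].
\]
This is the stated equivalent form. On the admissible interval we have \(0\le \pi U,\pi M,\pi N\le\pi\): the endpoints \(L\) and \(R\) are exactly where \(U=M\) and \(N=1-U\), respectively. So all three sines are nonnegative, and \(G\) is increasing on the interval. In particular an admissible zero, if it exists, is unique; call it \(U_*\).

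\textbf{Barrier step.} The inequality is a lower bound for an increasing-type quantity evaluated at an implicitly defined point. I will replace \(U_*\) by explicit barriers. Set \(H(U)=\frac1\pi G'(U)\), and let \(\Phi(A,B)=\sqrt{2(1+AB)}\). I will show that on the admissible interval the set \(\{H<\Phi\}\) lies in the region where \(G<0\), or else in the region where \(G>0\); then \(U_*\) cannot lie there.

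Concretely, I expect \(H\) to be concave on the interval, being a positive combination of sines of arguments in \([0,\pi]\). Then \(\{H<\Phi\}\) is a union of at most two end segments. For each end I will find an explicit barrier point \(U_b\) with \(H(U_b)\ge\Phi\) and with \(G(U_b)\) of the appropriate sign. This forces \(U_*\) into the middle segment, where concavity gives \(H\ge \min(H(U_{b,1}),H(U_{b,2}))\ge\Phi\).

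The natural candidate barrier is the symmetric point suggested by the equality case \(A=B=1\), where \(\kappa=\varepsilon=0\). There \(G=\cos(\pi\cdot)\)-type terms reduce to \(-2\cos\pi U\), so \(U_*=\tfrac12\) and \(H=2=\Phi\). Hence the barrier should be chosen as a perturbation of \(U=\tfrac12\), for instance the midpoint of \([L,R]\) or the point \(U=\tfrac12\) clipped to the interval.

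\textbf{Algebraic certificates.} The resulting two-parameter inequalities \(G(U_b)\lessgtr 0\) and \(H(U_b)\ge\Phi\) involve trigonometric functions of algebraic expressions in \(A,B,\kappa,\varepsilon\). I will make them polynomial in two steps. First, I reparametrize by \(A=2s/(1+s^2)\) and \(B=2t/(1+t^2)\), so that \(\kappa\) and \(\varepsilon\) become rational in \(s,t\in(0,1]\). Second, I replace \(\sin(\pi x)\) and \(\cos(\pi x)\) by rigorous polynomial bounds: Taylor lower and upper bounds, or the bounds \(\sin\pi x\ge 4x(1-x)\) and \(\sin \pi x\ge \pi x(1-x)\)-type, tightened near \(x=\tfrac12\). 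To remove the square root, I will square after checking that both sides are positive.

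This yields polynomial inequalities on the region \(\{(s,t)\in[0,1]^2 : B\ge B_0(A)\}\). I will map this region to the unit square, for example via \(t=t_0(s)+(1-t_0(s))\tau\) after rationalizing \(B_0\), or else cover it by a few subrectangles. On each piece I expand in the Bernstein basis and check that all coefficients are nonnegative, subdividing where needed. I anticipate that two such certificates suffice, one for each barrier.

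\textbf{Main obstacle.} The main obstacle is the equality corner \(A=B=1\). There the inequality is tight, so any polynomial relaxation of \(\sin\) and \(\cos\) must be exact to second order at \(U=\tfrac12\), \(\kappa=\varepsilon=0\). Note that \(\kappa\sim\sqrt{2(1-A)}\), which is why I pass to the \(s,t\) variables. The Bernstein coefficients will vanish at that corner. I will therefore first factor out the vanishing leading terms, namely powers of \((1-s)\) and \((1-t)\), analytically, and only then certify the quotient. A secondary difficulty is the curved lower boundary \(B=B_0(A)\), where \(L=R\) and the interval degenerates. There I will check directly that \(H(L)\ge\Phi\), so that no barrier is needed.
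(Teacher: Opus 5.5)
\textbf{Verdict: genuine gap.} Your framework is logically sound and close in spirit to the paper. $G_{A,B}$ is increasing, and $H=\frac1\pi G'_{A,B}$ is indeed concave on $[L,R]$, since each sine has an affine argument in $[0,\pi]$. Two barriers $U_{b,1}\le U\le U_{b,2}$ with $H(U_{b,i})\ge\Phi$ would therefore finish the proof. The gap is that you never produce these barriers, and that is the whole difficulty. Both must tend to $\tfrac12$ at the corner $A=B=1$, and at each one two transcendental inequalities must be certified.

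The candidates you name do not work:
\begin{itemize}
\item \emph{The point $U=\tfrac12$.} Lemma~\ref{lem:domain} gives $L\le\tfrac12\le R$ always, so clipping does nothing. By the symmetry $G_{B,A}(1-U)=-G_{A,B}(U)$, the sign of $G_{A,B}(\tfrac12)$ changes across the domain. So $\tfrac12$ is a right barrier for some $(A,B)$ and a left barrier for others, never both.
\item \emph{The midpoint of $[L,R]$.} It can fail $H\ge\Phi$. Take $(A,B)=(1,\tfrac12)$, so $\kappa=0$, $\varepsilon=\sqrt3/2$ and $[L,R]=[0,4-2\sqrt3]$. Then $G(\tfrac12)=\tfrac12-\cos(\pi\sqrt3/4)\approx0.29>0$, and the zero is $U\approx0.46$. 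At the midpoint $2-\sqrt3\approx0.268$ one gets $H\approx1.696<\sqrt3=\Phi$.
\end{itemize}
So neither candidate gives a left barrier there, and the expectation that two certificates suffice is unsupported. As written, the proposal is a search strategy rather than a proof.

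\textbf{The missing idea.} The paper makes the barrier explicit by construction rather than guessing it.
\begin{itemize}
\item First linearize. Lemma~\ref{lem:domain} gives $0\le M,N\le\tfrac12$, so $\sin\pi x\ge 2x$ applies to both, and $\sin\pi U\ge 2\min\{U,1-U\}$.
\item With $x=P-U$ one has the exact identity $H_R:=(A+B)(1-U)+B\kappa M+A\varepsilon N=BCx$, where $C=2+AB-A^2$. This is decreasing in $U$.
\item The barrier is the exact crossing point $U_*=P-\sigma/(2BC)$, with $\sigma=\sqrt{2(1+AB)}$. Then $H_R\ge\sigma/2$ holds automatically for every $U\le U_*$.
\item The only thing left to certify is $G_{A,B}(U_*)\ge0$ when $U_*<R$. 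This uses $\cos\pi N\le1$, $\cos t\ge1-t^2/2$, and $\cos\pi y\ge1-2y$ with $y=1-U_*\in[0,\tfrac12]$; the last needs $Y\ge0$. Together with $\pi^2<10$, $\sigma\le2$ and $2-\sigma\ge(1-AB)/2$, this reduces to the polynomial inequality $Z\ge0$.
\item Both $Y$ and $Z$ are certified on the full square $[0,1]^2$.
\item The left estimate comes for free from the symmetry $A\leftrightarrow B$, $U\leftrightarrow1-U$.
\end{itemize}
This route needs no half-angle substitution, no curved region, no factoring at the corner (the zero Bernstein coefficients there are harmless), and no concavity.
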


The proof of Theorem~\ref{thm:scalar} is elementary but rigid.  The
admissibility inequalities imply
\[
        0\leq M(U),N(U)\leq \frac12.
\]
Thus the sine terms can be bounded below by linear functions, reducing the
problem to two linear estimates.  Each of these estimates is obtained from a
barrier point for the monotone function \(G_{A,B}\).  The only global
positivity input required in the argument consists of two explicit
Bernstein-polynomial certificates on the square \([0,1]^2\), recorded in
Appendix~\ref{app:bernstein}.

The logical structure of the proof is as follows.  First, the fixed-arc
Scherk normalization gives the exact identity for \(W^2|K|\).  Second, the
zero equation \(f(z_\circ)=0\) is rewritten in harmonic-measure coordinates,
reducing the geometry to the scalar equation
\[
        G_{A,B}(U)=0.
\]
Third, harmonic-measure identities eliminate the phase appearing in
\(D_\circ\), and the Hopf estimate becomes precisely the derivative
inequality in Theorem~\ref{thm:scalar}.  Fourth, the scalar theorem proves
Theorem~\ref{thm:scherk-main}.  Finally, the strict comparison theorem of Kalaj and Melentijevi\'c transfers
the Scherk-family estimate to arbitrary minimal graphs over disks and gives
Theorem~\ref{thm:main}.

The paper is organized as follows.  Section~\ref{sec:scherk} recalls the
Scherk comparison normalization and derives the exact normalized curvature
identity.  Section~\ref{sec:harmonic} rewrites the zero equation
\(f(z_\circ)=0\) in harmonic-measure coordinates and eliminates the phase
term.  Section~\ref{sec:scalar-proof} proves the scalar derivative
inequality.  Section~\ref{sec:completion} completes the proof of the
normalized Hopf estimate.  Appendix~\ref{app:odd} records an auxiliary sharp
coefficient estimate for odd harmonic disk maps.  Appendix~\ref{app:lower}
gives the complementary lower bound in the Scherk family.  Appendix~\ref{app:bernstein} contains the Bernstein-polynomial positivity
certificates.  Appendix~\ref{app:log} records a log-subharmonicity
observation for \(W^2|K|\).

\section{The Scherk comparison family and the exact identity}\label{sec:scherk}

\subsection{The Scherk comparison theorem}

We use the following comparison theorem \cite[Theorem 1.4]{KalajMelentijevic2026}.  For $w\in\D$, put
\[
        \lambda_w=\frac{\ii(1-w^4)}{|1-w^4|},
        \qquad
        \mu_w(z)=\left(\frac{w+\lambda_w z}{1+\overline w\lambda_w z}\right)^2.
\]

\begin{theorem}[Scherk comparison principle]\label{thm:comparison}
For every $w\in\D$, there are four distinct points
\[
        \zeta_0,\zeta_1,\zeta_2,\zeta_3\in\T
\]
and a sense-preserving harmonic diffeomorphism
\[
        f^\diamond:\D\to Q(\zeta_0,\zeta_1,\zeta_2,\zeta_3)
\]
onto the corresponding quadrilateral such that
\[
        f^\diamond(0)=0,
        \qquad
        (f^\diamond)_z(0)>0,
\]
and
\[
        \overline{(f^\diamond)_{\bar z}(z)}=\mu_w(z)(f^\diamond)_z(z).
\]
Equivalently, if $f^\diamond=h^\diamond+\overline{g^\diamond}$, then
\[
        (g^\diamond)'(z)=\mu_w(z)(h^\diamond)'(z).
\]
The harmonic map $f^\diamond$ gives a Scherk-type minimal graph $S^\diamond$ over $Q(\zeta_0,\zeta_1,\zeta_2,\zeta_3)$ containing the point $\xi=(0,0,0)$. Its upward normal at $\xi$ is
\[
        n^\diamond_\xi
        =-\frac{1}{1+|w|^2}\bigl(2\operatorname{Im}w,2\operatorname{Re}w,-1+|w|^2\bigr),
\]
and $U^\diamond_{uv}(0,0)=0$. If $S$ is any other non-parametric minimal graph over the unit disk satisfying
\[
        (0,0,0)\in S,
        \qquad
        n_\xi=n^\diamond_\xi,
        \qquad
        U_{uv}(0,0)=0,
\]
then
\[
        |K_S(\xi)|< |K_{S^\diamond}(\xi)|.
\]
Moreover,
\[
        K_{S^\diamond}(\xi)
        =-\frac{4(1-|w|^2)^2}{(1+|w|^2)^4 |(f^\diamond)_z(0)|^2}.
\]
\end{theorem}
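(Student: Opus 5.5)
The plan is to reduce the comparison to an extremal problem for harmonic projections, and then to identify the extremals as quadrilateral Scherk maps.

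\textbf{Step 1: reformulate via Weierstrass data.} Let \(S\) be a minimal graph over \(\D\) with \((0,0,0)\in S\). Parametrize it isothermally over \(\D\), normalized so that the parameter point \(0\) lies over the origin. Its horizontal projection \(f=h+\overline g\) is then a sense-preserving harmonic diffeomorphism onto the unit disk with \(f(0)=0\). Its second dilatation is a square, \(g'/h'=q^2\), where \(q\) is (up to the sign and rotation conventions of the statement) the Gauss map. In these terms
\[
        K_S(\xi)=-\frac{4|q'(0)|^2}{|h'(0)|^2(1+|q(0)|^2)^4}.
\]
Prescribing \(n_\xi=n^\diamond_\xi\) fixes \(q(0)\) in terms of \(w\), consistently with \(\mu_w(0)=w^2\). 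The condition \(U_{uv}(0,0)=0\), together with a rotation of the parameter disk, fixes the argument of \(q'(0)\) relative to \(h'(0)\); this is the source of the factor \(\lambda_w\). By the Schwarz--Pick lemma, \(|q'(0)|\le 1-|q(0)|^2\), with equality exactly when \(q\) is a disk automorphism, i.e.\ when the dilatation equals \(\mu_w\). It therefore suffices to bound \(|h'(0)|=|f_z(0)|\) from below by \((f^\diamond)_z(0)\). I would do this by a Heinz-type extremal inequality: among harmonic diffeomorphisms \(f\) onto domains containing \(\D\) (or onto \(\D\)) with \(f(0)=0\) and prescribed \(\mu(0)\), \(\arg\mu'(0)\), the quantity \(|f_z(0)|\) is minimized by harmonic maps onto inscribed quadrilaterals.

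\textbf{Step 2: construct \(f^\diamond\).} By Sheil-Small's theorem, a harmonic map of \(\D\) onto a convex \(n\)-gon given by a step-function boundary has dilatation a Blaschke product of degree \(n-2\). For \(n=4\) this degree is \(2\), and \(\mu_w\), the square of an automorphism, is such a product. Conversely, for step data with jumps at \(\zeta_0,\dots,\zeta_3\) and vertices on \(\T\), one has \(h'(z)=\sum_k c_k/(z-\zeta_k)\) and \(g'(z)=\sum_k \overline{c_k}\,\zeta_k^{\,2}/(z-\zeta_k)\)-type formulas. The relation \(g'=\mu_w h'\) becomes a finite system on the \(\zeta_k\) and the vertex positions. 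The conditions \(f^\diamond(0)=0\) and \((f^\diamond)_z(0)>0\) are then additional real equations. I would solve this system by a continuity or degree argument in \(w\), starting from the symmetric Scherk square at \(w=0\), where \(h'=c/(1-z^4)\) works explicitly. The normal formula then follows from \(q(0)\) via the standard stereographic relation, and the curvature formula follows from Step 1 with \(|q'(0)|=1-|w|^2\).

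\textbf{Step 3: strict comparison, the main obstacle.} The hard part is proving that the quadrilateral map minimizes \(|f_z(0)|\) under the given constraints, with strict inequality for genuine disk graphs. I would try the following. Write \(f(0)=0\) as a Poisson integral of boundary values \(e^{i\theta(t)}\). Then \(f_z(0)=\frac{1}{2\pi}\int e^{i\theta(t)}e^{-it}\,\dd t\), while the dilatation constraints at \(0\) become linear moment conditions on the same boundary measure. This turns the problem into a linear extremal problem over measures on \(\T\) with values on \(\T\). Its extremal points are step functions with at most as many jumps as there are constraints, and a count gives four. Strictness follows because the boundary map of a disk graph is continuous and hence not a step function.
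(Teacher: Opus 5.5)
The paper does not prove this statement; it quotes it from \cite[Theorem 1.4]{KalajMelentijevic2026}. Judged on its own, your proposal has a genuine gap, and it sits in the decoupling of Step~1.

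After Schwarz--Pick you need $|h'(0)|\ge (f^\diamond)_z(0)$ for every competitor. In Step~3 the competitors are described only by $f(0)=0$ and moment conditions encoding $\mu(0)=w^2$ and $\arg\mu'(0)$. On that class the inequality is false.

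\begin{itemize}
\item The equilateral-triangle step map, which is Hall's extremal for Heinz's inequality, is an $L^1$-limit of boundary homeomorphisms, hence of harmonic diffeomorphisms of $\D$ onto $\D$.
\item It has $f(0)=0$, $f_{\bar z}(0)=0$, dilatation $\lambda z$ with $|\lambda|=1$, and $|f_z(0)|^2=27/(4\pi^2)$.
\item Rotations of source and target preserve $f(0)=0$, $f_{\bar z}(0)=0$ and $|f_z(0)|$, and can give $\mu'(0)$ any argument in whatever normalization you fix.
\item A small perturbation of the boundary values then yields, for small $w$, competitors with $\mu(0)=w^2$.
\end{itemize}

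So the infimum of $|f_z(0)|^2$ over your class is at most about $27/(4\pi^2)$. On the other side, $(f^\diamond)_z(0)^2=8/\pi^2$ for the Scherk square at $w=0$. Moreover, Corollary~\ref{cor:normalized-comparison} together with Theorem~\ref{thm:scherk-main} gives $(f^\diamond)_z(0)^2\ge 8/(\pi^2(1+|w|^2)^2)$, which exceeds $27/(4\pi^2)$ for small $|w|$.

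What excludes such maps for graphs is a \emph{global}, nonlinear constraint: the dilatation is the square $q^2$ of a self-map of $\D$. Consequently $|\mu'(0)|=2|w|\,|q'(0)|$ is small, and locally uniform limits of squares are squares, while $\lambda z$ is not. No finite set of linear moment conditions at $0$ captures this. A correct argument has to control $|q'(0)|/|h'(0)|$ jointly, rather than bounding $|q'(0)|$ by Schwarz--Pick and $|h'(0)|$ separately. The decoupled route works in Theorem~\ref{thm:central} only because oddness already rules out the triangle.

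Step~3 also fails as a method, independently of the class problem.
\begin{itemize}
\item Monotone circle-valued boundary maps do not form a convex set, and $|f_z(0)|$ is not linear. The closed convex hull contains $F\equiv 0$ (average $e^{\ii s}F$ over $s$), which satisfies all your linear constraints and has $f_z(0)=0$. So an extreme-point or Carath\'eodory count cannot produce circle-valued step functions, let alone ones with exactly four jumps and dilatation exactly $\mu_w$.
\item Strictness would require that every extremal is such a step map (a uniqueness statement). Continuity of the competitor's boundary map alone is not enough.
\item The existence of $f^\diamond$ in Step~2, via ``a continuity or degree argument in $w$'', is asserted rather than carried out.
\end{itemize}
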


Because the compared surfaces have the same normal, the theorem immediately transfers to the normalized quantity.

\begin{corollary}[Normalized comparison]\label{cor:normalized-comparison}
Under the hypotheses of Theorem~$\ref{thm:comparison}$,
\[
        W_S(\xi)^2 |K_S(\xi)|
        <
        W_{S^\diamond}(\xi)^2 |K_{S^\diamond}(\xi)|.
\]
Furthermore,
\[
        W_{S^\diamond}(\xi)^2 |K_{S^\diamond}(\xi)|
        =\frac{4}{(1+|w|^2)^2 |(f^\diamond)_z(0)|^2}.
\]
\end{corollary}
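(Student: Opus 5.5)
The corollary follows directly from Theorem~\ref{thm:comparison}, using that $W$ depends only on the unit normal at the point. The plan has three steps: show the two surfaces have equal $W$ at $\xi$, multiply the strict curvature inequality by this common factor, and substitute the explicit curvature formula.

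\emph{Equal slope factors.} Recall that $W=1/n_3$, where $n_3$ is the vertical component of the upward unit normal. By hypothesis $n_\xi=n^\diamond_\xi$, so $W_S(\xi)=W_{S^\diamond}(\xi)$. We compute this common value from the formula for $n^\diamond_\xi$ in Theorem~\ref{thm:comparison}. Its third component is
\[
        n_3=\frac{1-|w|^2}{1+|w|^2}>0,
\]
which is positive because $w\in\D$, so the normal is indeed upward. Hence
\[
        W_{S^\diamond}(\xi)=\frac{1+|w|^2}{1-|w|^2}.
\]
As a consistency check, this agrees with $W=(1+|g|^2)/(1-|g|^2)$ when the Gauss parameter at the point is $|g|=|w|$.

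\emph{Strict inequality.} Multiply $|K_S(\xi)|<|K_{S^\diamond}(\xi)|$ by the common positive factor $W(\xi)^2$. This gives the strict normalized inequality directly.

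\emph{Explicit value.} Substitute the stated formula
\[
        |K_{S^\diamond}(\xi)|=\frac{4(1-|w|^2)^2}{(1+|w|^2)^4|(f^\diamond)_z(0)|^2}.
\]
Multiplying by $W^2=(1+|w|^2)^2/(1-|w|^2)^2$, the factor $(1-|w|^2)^2$ cancels. Two powers of $(1+|w|^2)$ also cancel, leaving
\[
        W_{S^\diamond}(\xi)^2|K_{S^\diamond}(\xi)|=\frac{4}{(1+|w|^2)^2|(f^\diamond)_z(0)|^2}.
\]

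There is no real obstacle. The only care needed is to confirm that the third component of the given normal is $(1-|w|^2)/(1+|w|^2)$, with the sign conventions consistent, so that it is the upward normal and $W$ is finite and positive. This holds because $|w|<1$.
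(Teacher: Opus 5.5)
Your proposal is correct and is essentially the paper's proof: you use the equal unit normals to get a common value of $W=1/n_3$ at $\xi$, multiply the strict curvature comparison by $W^2$, and read off $W_{S^\diamond}(\xi)=(1+|w|^2)/(1-|w|^2)$ from $n^\diamond_\xi$ before substituting the curvature formula. The cancellation you carry out matches the paper's computation exactly.
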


\begin{proof}
The two surfaces have the same unit normal at $\xi$. Since $W=1/n_3$, they also have the same value of $W$ at $\xi$. Multiplying the curvature comparison in Theorem~\ref{thm:comparison} by this common factor $W^2$ gives the first assertion.

From the formula for $n^\diamond_\xi$,
\[
        (n^\diamond_\xi)_3=\frac{1-|w|^2}{1+|w|^2},
        \qquad
        W_{S^\diamond}(\xi)=\frac{1+|w|^2}{1-|w|^2}.
\]
Multiplication by the curvature formula in Theorem~\ref{thm:comparison} gives the stated identity.
\end{proof}

\begin{remark}[Two normalizations]\label{rem:normalizations}
The comparison theorem is stated in the normalization $f^\diamond(0)=0$. The fixed-arc normalization used below is obtained by precomposition with a disk automorphism. In that normalization the Beltrami coefficient becomes $z^2$, and the preimage of the point above the origin is generally not $0$. We denote this preimage by $\zc$ and write
\[
        f(\zc)=0.
\]
The quantity $W^2|K|$ is intrinsic to the surface point, and therefore is unchanged by this reparametrization.
\end{remark}

\subsection{Parameters}

The fixed-arc Scherk family is described by two admissible angular parameters $p,q$. We use
\[
        A=\sin p,
        \qquad
        B=\sin(q-p),
        \qquad
        c_p=\cos p,
        \qquad
        d_q=\cos(q-p),
\]
and
\[
        \mu^2=AB,
        \qquad
        0<A\leq 1,
        \qquad
        0<B\leq 1.
\]
The arc parameter $\alpha$ is determined by
\[
        \tan^2\frac{\alpha}{2}=\frac{A}{B},
\]
so that
\[
        \sin\alpha=\frac{2\sqrt{AB}}{A+B}=\frac{2\mu}{A+B}.
\]
We write
\[
        \zc=re^{\ii t_\circ},
        \qquad
        0\leq r<1,
\]
for the distinguished zero of the harmonic projection.

The Gauss-map parameter is an automorphism of the disk. We use the standard convention
\[
        g(z)=e^{\ii\vartheta}\frac{z-a}{1-\overline a z},
        \qquad
        a=|a|e^{\ii\delta},
\]
where
\[
        |a|=\sqrt{\frac{1-\mu}{1+\mu}}.
\]
Consequently,
\[
        \frac{1+|a|^2}{1-|a|^2}=\frac1\mu,
        \qquad
        \frac{4|a|}{1+|a|^2}=2\sqrt{1-\mu^2}.
\]

\subsection{Exact normalized curvature identity}

For a minimal graph written in conformal Enneper--Weierstrass form, one has
\[
        W=\frac{1+|g|^2}{1-|g|^2}.
\]
At $z=\zc$ this gives
\[
        W=\frac{1+|g(\zc)|^2}{1-|g(\zc)|^2}.
\]
Using the automorphism formula for $g$, we have
\[
        1-|g(\zc)|^2
        =\frac{(1-|a|^2)(1-r^2)}{|1-\overline a\zc|^2},
\]
and
\[
        1+|g(\zc)|^2
        =\frac{|1-\overline a\zc|^2+|\zc-a|^2}{|1-\overline a\zc|^2}.
\]
Hence,
\[
        W=\frac{|1-\overline a\zc|^2+|\zc-a|^2}{(1-|a|^2)(1-r^2)}.
\]
The numerator can be rewritten as follows
\[
\begin{aligned}
 |1-\overline a\zc|^2+|\zc-a|^2
 &= (1+r^2)(1+|a|^2)-4\operatorname{Re}(a\overline\zc)  \\
 &= (1+|a|^2)\left(1+r^2-\frac{4\operatorname{Re}(a\overline\zc)}{1+|a|^2}\right).
\end{aligned}
\]
Thus
\[
        W=\frac{\Dc}{\mu(1-r^2)},
\]
where
\[
        \Dc=1+r^2-\frac{4\operatorname{Re}(a\overline\zc)}{1+|a|^2}.
\]
Equivalently,
\[
        \Dc=1+r^2-2\sqrt{1-\mu^2}\, r\cos(t_\circ-\delta).
\]

\begin{theorem}[Exact normalized curvature identity]\label{thm:exact}
Let $\zc=re^{\ii t_\circ}$ be the distinguished zero of the harmonic projection.  Then
\[
        W^2|K|
        =\frac{\pi^2}{4}\frac{1+
        \mu^2}{\mu^2}
        \frac{|1-\zc^2|^2|\zc^2-e^{2\ii\alpha}|^2}{(1-r^2)^2\Dc^2}.
\]
\end{theorem}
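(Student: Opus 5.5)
The identity is a direct multiplication of two formulas already recorded earlier in the paper, evaluated at the same point \(z=\zc\) of the fixed-arc parameter disk. From the introduction we have the curvature formula for the fixed-arc Scherk family, taken from the explicit Weierstrass data of \cite{KalajMelentijevic2026}:
\[
        -K(z)=\frac{\pi^2}{4}(1+\mu^2)\frac{|1-z^2|^2|z^2-e^{2\ii\alpha}|^2}{D(z)^4}.
\]
The computation just preceding the statement gives
\[
        W(\zc)=\frac{\Dc}{\mu(1-r^2)},\qquad \Dc=D(\zc).
\]
The plan is to fix \(z=\zc\) in the curvature formula and multiply by \(W(\zc)^2\).

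First I will verify that the factor \(D(z)\) appearing in the curvature formula is the same quantity as the \(\Dc\) produced by the automorphism computation. The general identity \(K=-4|g'|^2/(|\varphi|^2(1+|g|^2)^4)\) applies here. For the automorphism \(g\) we have
\[
        |g'(z)|=\frac{1-|a|^2}{|1-\overline a z|^2},
        \qquad
        1+|g|^2=\frac{(1+|a|^2)D(z)}{|1-\overline a z|^2},
\]
with \(D(z)=1+|z|^2-4\operatorname{Re}(a\overline z)/(1+|a|^2)\). Hence \((1+|g|^2)^4\) contributes \(D(z)^4\) up to powers of \(|1-\overline a z|\) and of \(1+|a|^2\). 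Those factors cancel against \(|g'|^2\) and against \(|\varphi|^2\), whose modulus in the fixed-arc family carries \(|1-\overline az|^{-4}\) times \(|1-z^2|^{-1}|z^2-e^{2\ii\alpha}|^{-1}\) up to constants. This matching is the one point needing care. I will take the curvature formula as stated from \cite{KalajMelentijevic2026} and only check that its \(D\) is the one defined via \(\Dc\). Any residual constant involving \(|a|\) is absorbed into the prefactor \(\frac{\pi^2}{4}(1+\mu^2)\), using \((1+|a|^2)/(1-|a|^2)=1/\mu\).

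Then the computation is immediate:
\[
        W^2|K|=\frac{\Dc^2}{\mu^2(1-r^2)^2}\cdot\frac{\pi^2}{4}(1+\mu^2)\frac{|1-\zc^2|^2|\zc^2-e^{2\ii\alpha}|^2}{\Dc^4}.
\]
This simplifies to the stated expression, since \(\Dc^2/\Dc^4=1/\Dc^2\).

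Finally, Remark~\ref{rem:normalizations} shows that \(W^2|K|\) is intrinsic to the surface point. So computing it at \(\zc\) in the fixed-arc normalization gives the value at the point above the center, as required. The only real obstacle is the bookkeeping in identifying \(D\) with \(\Dc\). The algebra itself is a one-line cancellation.
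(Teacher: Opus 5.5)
Your proposal is correct and follows the paper's proof: take the fixed-arc curvature formula \cite[Eq.~(4.2)]{KalajMelentijevic2026} at \(z=\zc\), multiply by \(W^2=\Dc^2/(\mu^2(1-r^2)^2)\), and use \(\Dc^2/\Dc^4=1/\Dc^2\). Your side check identifying \(D\) with \(\Dc\) is not load-bearing (the paper does not do it), but as written its exponents are wrong: for the \(|1-\overline a z|\) factors to cancel, \(|\varphi|\) must carry \(|1-\overline a z|^{2}\,|1-z^2|^{-1}|z^2-e^{2\ii\alpha}|^{-1}\), not \(|1-\overline a z|^{-4}\).
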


\begin{proof}
The explicit curvature formula for the fixed-arc two-parameter Scherk family is
\[
        -K
        =\frac{\pi^2}{4}(1+\mu^2)
        \frac{|1-\zc^2|^2|\zc^2-e^{2\ii\alpha}|^2}{\Dc^4};
\]
see \cite[Eq. (4.2)]{KalajMelentijevic2026}.  Multiplying this identity by
\[
        W^2=\frac{\Dc^2}{\mu^2(1-r^2)^2}
\]
gives the formula.
\end{proof}

\begin{corollary}[Zero-control formulation]\label{cor:zero-control}
The estimate
\[
        W^2|K|\leq \frac{\pi^2}{2}
\]
is equivalent to
\[
        |1-\zc^2|\,|\zc^2-e^{2\ii\alpha}|
        \leq
        \sqrt{\frac{2\mu^2}{1+\mu^2}}
        (1-r^2)\Dc.
\]
\end{corollary}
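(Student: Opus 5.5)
This is a direct algebraic rearrangement of Theorem~\ref{thm:exact}. All the quantities involved are positive, so taking square roots and clearing denominators are reversible steps.

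First, by Theorem~\ref{thm:exact} the inequality \(W^2|K|\le \pi^2/2\) reads
\[
        \frac{\pi^2}{4}\frac{1+\mu^2}{\mu^2}
        \frac{|1-\zc^2|^2|\zc^2-e^{2\ii\alpha}|^2}{(1-r^2)^2\Dc^2}
        \le \frac{\pi^2}{2}.
\]
Dividing both sides by \(\pi^2/4\) and multiplying by the positive quantity \(\mu^2(1-r^2)^2\Dc^2/(1+\mu^2)\), this is equivalent to
\[
        |1-\zc^2|^2|\zc^2-e^{2\ii\alpha}|^2\le \frac{2\mu^2}{1+\mu^2}(1-r^2)^2\Dc^2 .
\]
The positivity needed here holds because \(0<\mu\le 1\) and \(0\le r<1\).

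Second, we check that \(\Dc>0\), which is needed to take square roots without losing equivalence. This follows from \(\Dc=\mu(1-r^2)W\) with \(W\ge 1\). Directly, we also have
\[
        \Dc\ge 1+r^2-2\sqrt{1-\mu^2}\,r\ge (1-r)^2+2r\bigl(1-\sqrt{1-\mu^2}\bigr)>0 .
\]

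Third, both sides of the squared inequality are squares of nonnegative quantities, and \(x\mapsto\sqrt{x}\) is increasing on \([0,\infty)\). Taking square roots therefore gives exactly the stated inequality
\[
        |1-\zc^2|\,|\zc^2-e^{2\ii\alpha}|\le \sqrt{\frac{2\mu^2}{1+\mu^2}}\,(1-r^2)\Dc .
\]
Every step is an equivalence, so the two formulations are equivalent. The only point needing care is the positivity of \(\Dc\) and \(1-r^2\), handled in the second step.
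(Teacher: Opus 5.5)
Your proof is correct and matches the paper, which states this corollary without proof as an immediate rearrangement of Theorem~\ref{thm:exact}: divide by $\pi^2/4$, clear the positive denominators $\mu^2$, $(1-r^2)^2$, $\Dc^2$, and take square roots. Your explicit verification that $\Dc>0$, either via $\Dc=\mu(1-r^2)W$ or via $(1-r)^2+2r\bigl(1-\sqrt{1-\mu^2}\bigr)>0$, supplies the positivity the paper leaves implicit.
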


\begin{remark}
The inequality in Corollary~\ref{cor:zero-control} is not intended for arbitrary $z\in\D$.  The decisive extra information is that $z=\zc$ is the zero of the harmonic projection, $f(\zc)=0$.
\end{remark}

\section{Harmonic-measure coordinates and phase elimination}\label{sec:harmonic}

\subsection{The zero equation}

Let
\[
        I_1=(0,\alpha),\quad
        I_2=(\alpha,\pi),\quad
        I_3=(\pi,\pi+\alpha),\quad
        I_4=(\pi+\alpha,2\pi),
\]
and put
\[
        \Omega_j(z)=\omega(z,I_j,\D),
        \qquad j=1,2,3,4.
\]
In fixed-arc normalization, the boundary values of the harmonic projection are the four constant values
\[
        P_1=e^{\ii(q-p+2\beta)},
        \qquad
        P_2=e^{-\ii p},
        \qquad
        P_3=e^{\ii(p-q+2\beta)},
        \qquad
        P_4=e^{\ii p}.
\]
Since $f$ is the Poisson extension of this step function, we see that
\[
        f(z)=\sum_{j=1}^4 \Omega_j(z)P_j.
\]
Therefore,
\[
        f(\zc)=0
        \quad\Longleftrightarrow\quad
        \sum_{j=1}^4 \Omega_j(\zc)P_j=0.
\]
Introduce
\[
        U=\Omega_1+
        \Omega_3,
        \qquad
        V=\Omega_1-
        \Omega_3,
        \qquad
        T=\Omega_4-
        \Omega_2.
\]
Then
\[
        \Omega_1=\frac{U+V}{2},\quad
        \Omega_3=\frac{U-V}{2},\quad
        \Omega_4=\frac{1-U+T}{2},\quad
        \Omega_2=\frac{1-U-T}{2}.
\]
The positivity of the harmonic measure gives
\[
        |V|\leq U,
        \qquad
        |T|\leq 1-U.
\]
The zero equation becomes
\[
        e^{2\ii\beta}\bigl(U\cos(q-p)+\ii V\sin(q-p)\bigr)
        +(1-U)\cos p+
        \ii T\sin p=0.
\]
Equivalently,
\[
        V=c_p\left(\frac{1+\mu^2}{B(A+B)}-U\right),
\]
and
\[
        T=-d_q\left(U+\frac{c_p^2}{A(A+B)}\right).
\]
Thus, the zero equation leaves only one free harmonic-measure parameter, namely $U$.

\subsection{The numerator and the master inequality}

The map $z\mapsto z^2$ sends $I_1\cup I_3$ onto the single arc $(0,2\alpha)$. Hence,
\[
        U=\omega(\zc^2,(0,2\alpha),\D).
\]
For $w=\zc^2$, the harmonic-measure formula for one circular arc gives
\[
        \sin(\pi U)
        =\frac{(1-|w|^2)\sin\alpha}{|1-w|\,|e^{2\ii\alpha}-w|}.
\]
Since $|w|=r^2$,
\[
        \frac{|1-\zc^2|\,|\zc^2-e^{2\ii\alpha}|}{1-r^2}
        =\frac{(1+r^2)\sin\alpha}{\sin(\pi U)}.
\]
Substitution into Theorem~\ref{thm:exact} yields
\[
        W^2|K|
        =\frac{\pi^2}{4}\frac{1+\mu^2}{\mu^2}
        \left(\frac{(1+r^2)\sin\alpha}{\Dc\sin(\pi U)}\right)^2.
\]
Using
\[
        \frac{\Dc}{1+r^2}
        =1-\sqrt{1-\mu^2}\frac{2r}{1+r^2}\cos(t_\circ-\delta),
\]
we see that $W^2|K|\leq \pi^2/2$ is equivalent to
\[
        \sin(\pi U)
        \left(1-\sqrt{1-\mu^2}\frac{2r}{1+r^2}\cos(t_\circ-\delta)\right)
        \geq
        \frac{\sqrt{2(1+
        \mu^2)}}{A+B}.
\]
This is the master inequality.

\subsection{The scalar equation for \texorpdfstring{$U$}{U}}
\begin{lemma}[Harmonic-measure cross-ratio condition]\label{lem:hm-crossratio}
Let \(z\in\mathbb D\), and let
\[
        \Omega_j=\omega(z,I_j,\mathbb D),
        \qquad j=1,2,3,4,
\]
where
\[
        I_1=(0,\alpha),\quad
        I_2=(\alpha,\pi),\quad
        I_3=(\pi,\pi+\alpha),\quad
        I_4=(\pi+\alpha,2\pi).
\]
Then
\[
\frac{
\sin(\pi\Omega_1)\sin(\pi\Omega_3)}
{\sin(\pi\Omega_2)\sin(\pi\Omega_4)}
        =\tan^2\frac{\alpha}{2}.
\]
Equivalently, in the variables
\[
        U=\Omega_1+\Omega_3,\qquad
        V=\Omega_1-\Omega_3,\qquad
        T=\Omega_4-\Omega_2,
\]
one has
\[
\frac{
\sin\frac{\pi}{2}(U+V)
\sin\frac{\pi}{2}(U-V)}
{
\sin\frac{\pi}{2}(1-U-T)
\sin\frac{\pi}{2}(1-U+T)}
        =\tan^2\frac{\alpha}{2}.
\]
\end{lemma}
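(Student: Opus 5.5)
The plan is to derive the identity from a single-arc formula for \(\sin(\pi\omega)\), in which all the dependence on \(z\) cancels in the ratio, leaving only a cross-ratio of the four endpoints. First I will record the general version of the one-arc formula already used in Section~\ref{sec:harmonic}. For an arc \(J=(a,b)\subset\T\) with endpoints \(\zeta=e^{\ii a}\) and \(\zeta'=e^{\ii b}\), the claim is
\[
        \sin\bigl(\pi\,\omega(z,J,\D)\bigr)
        =\frac{(1-|z|^2)\,|\zeta-\zeta'|}{2\,|\zeta-z|\,|\zeta'-z|},
        \qquad z\in\D .
\]
This follows from the displayed formula for the arc \((0,2\alpha)\) after a rotation \(z\mapsto e^{-\ii a}z\). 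Both sides are invariant under rotation, and for the arc \((0,2\alpha)\) the half-chord is \(|1-e^{2\ii\alpha}|/2=\sin\alpha\). To make the step self-contained, I can also re-derive it from the classical description
\[
        \pi\omega(z,J,\D)=\arg\frac{\zeta'-z}{\zeta-z}-\frac{b-a}{2}.
\]
Write \(\theta\) for the angle at \(z\) subtended by the chord \(\zeta\zeta'\). The formula then reads \(\pi\omega=\theta-\tfrac{b-a}{2}\). Its sine is computed from the areas of the triangles \(z\zeta\zeta'\) and \(0\zeta\zeta'\). This route is the only real computation in the proof, and I regard it as routine.

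Next I apply the formula to the four arcs \(I_1,\dots,I_4\). Their endpoints are
\[
        \zeta_0=1,\quad \zeta_1=e^{\ii\alpha},\quad \zeta_2=-1,\quad \zeta_3=-e^{\ii\alpha},
\]
with \(I_j\) joining \(\zeta_{j-1}\) and \(\zeta_j\), indices taken mod \(4\). In the product \(\sin(\pi\Omega_1)\sin(\pi\Omega_3)\) the numerator carries the factor \((1-|z|^2)^2/4\) times \(|\zeta_0-\zeta_1|\,|\zeta_2-\zeta_3|\). The denominator is \(\prod_{k=0}^3|\zeta_k-z|\), since each endpoint occurs exactly once. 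The same holds for \(\sin(\pi\Omega_2)\sin(\pi\Omega_4)\), with chord factor \(|\zeta_1-\zeta_2|\,|\zeta_3-\zeta_0|\).

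Taking the ratio, every \(z\)-dependent factor cancels, and only the chord cross-ratio survives:
\[
        \frac{|\zeta_0-\zeta_1|\,|\zeta_2-\zeta_3|}{|\zeta_1-\zeta_2|\,|\zeta_3-\zeta_0|}
        =\frac{(2\sin\frac{\alpha}{2})^2}{(2\cos\frac{\alpha}{2})^2}
        =\tan^2\frac{\alpha}{2}.
\]
Here I use \(|1-e^{\ii\alpha}|=2\sin\frac{\alpha}{2}\) and \(|e^{\ii\alpha}+1|=2\cos\frac{\alpha}{2}\). The remaining two chords are obtained from these by the symmetry \(z\mapsto -z\). Note also that every \(\Omega_j\in(0,1)\), so no sine vanishes and the ratio is well defined.

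Finally, the second form of the statement is a direct substitution of
\[
        \Omega_1=\tfrac{U+V}{2},\quad
        \Omega_3=\tfrac{U-V}{2},\quad
        \Omega_2=\tfrac{1-U-T}{2},\quad
        \Omega_4=\tfrac{1-U+T}{2}.
\]
The main obstacle I expect is only bookkeeping in the general one-arc formula: checking the sign and branch conventions so that \(\pi\omega\in(0,\pi)\) and the sine is positive. The rotation argument from the already-established formula avoids this entirely.
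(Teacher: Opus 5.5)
Your proof is correct and is essentially the paper's argument written out explicitly. Your one-arc formula $\sin(\pi\omega(z,J,\D))=\frac{(1-|z|^2)|\zeta-\zeta'|}{2|\zeta-z||\zeta'-z|}$ is the half-chord of the image arc after a disk automorphism sends $z$ to $0$, and the cancellation of the $z$-dependent factors in your ratio is exactly the M\"obius invariance of the absolute cross-ratio that the paper invokes (your check that each $\Omega_j\in(0,1)$, so no sine vanishes, is a detail the paper leaves implicit).
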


\begin{proof}
Let \(\phi\) be a disk automorphism with \(\phi(z)=0\).  Harmonic measure is
conformally invariant, so
\[
        \Omega_j=\omega(0,\phi(I_j),\mathbb D).
\]
At the origin, harmonic measure is normalized arclength.  Hence, the four arcs
\(\phi(I_j)\) have angular lengths \(2\pi\Omega_j\).

Let the endpoints of the four image arcs be
\[
        \eta_0,\eta_1,\eta_2,\eta_3\in\mathbb T
\]
in cyclic order.  Since the angular length of the arc from \(\eta_0\) to
\(\eta_1\) is \(2\pi\Omega_1\), and similarly for the other arcs, we have
\[
        |\eta_0-\eta_1|=2\sin(\pi\Omega_1),\qquad
        |\eta_1-\eta_2|=2\sin(\pi\Omega_2),
\]
\[
        |\eta_2-\eta_3|=2\sin(\pi\Omega_3),\qquad
        |\eta_3-\eta_0|=2\sin(\pi\Omega_4).
\]
The absolute cross-ratio
\[
        \left|
        \frac{(\eta_0-\eta_1)(\eta_2-\eta_3)}
        {(\eta_1-\eta_2)(\eta_3-\eta_0)}
        \right|
\]
is invariant under M\"obius transformations.  Therefore, it is equal to the
corresponding cross-ratio of the original endpoints
\[
        1,\quad e^{\ii\alpha},\quad -1,\quad -e^{\ii\alpha}.
\]
Thus
\[
\begin{aligned}
\frac{\sin(\pi\Omega_1)\sin(\pi\Omega_3)}
     {\sin(\pi\Omega_2)\sin(\pi\Omega_4)}
&=
\left|
\frac{(1-e^{\ii\alpha})(-1+e^{\ii\alpha})}
{(e^{\ii\alpha}+1)(-e^{\ii\alpha}-1)}
\right|    \\
&=
\frac{4\sin^2(\alpha/2)}{4\cos^2(\alpha/2)}
=
\tan^2\frac{\alpha}{2}.
\end{aligned}
\]
Finally, substituting
\[
        \Omega_1=\frac{U+V}{2},\quad
        \Omega_3=\frac{U-V}{2},\quad
        \Omega_4=\frac{1-U+T}{2},\quad
        \Omega_2=\frac{1-U-T}{2}
\]
gives the stated form.
\end{proof}
By Lemma~\ref{lem:hm-crossratio}, the variables \(U,V,T\) satisfy
\[
\frac{
\sin\frac{\pi}{2}(U+V)
\sin\frac{\pi}{2}(U-V)}{
\sin\frac{\pi}{2}(1-U-T)
\sin\frac{\pi}{2}(1-U+T)}
        =\tan^2\frac{\alpha}{2}.
\]
Equivalently,
\[
        \cos(\pi V)-\cos(\pi U)
        =\tan^2\frac{\alpha}{2}\bigl(\cos(\pi T)+\cos(\pi U)\bigr).
\]
Since $\tan^2(\alpha/2)=A/B$, this equation becomes
\[
        B\cos(\pi V)-A\cos(\pi T)-(A+B)\cos(\pi U)=0.
\]
Define
\[
        \kappa=|c_p|=\sqrt{1-A^2},
        \qquad
        \eps=|d_q|=\sqrt{1-B^2},
\]
\[
        M(U)=|V(U)|=\kappa\left(\frac{1+AB}{B(A+B)}-U\right),
\]
and
\[
        N(U)=|T(U)|=\eps\left(U+\frac{\kappa^2}{A(A+B)}\right).
\]
Then the scalar equation is as follows
\[
        G_{A,B}(U)=0,
\]
where
\[
        G_{A,B}(U)
        =B\cos(\pi M(U))-A\cos(\pi N(U))-(A+B)\cos(\pi U).
\]
The admissibility inequalities are
\[
        0\leq M(U)\leq U,
        \qquad
        0\leq N(U)\leq 1-U.
\]
They are equivalent to
\[
        L(A,B)\leq U\leq R(A,B),
\]
where
\[
        L(A,B)=\frac{\kappa}{1+
        \kappa}\frac{1+AB}{B(A+B)},
\]
and
\[
        R(A,B)=\frac{1-\eps\kappa^2/[A(A+B)]}{1+
        \eps}.
\]

\begin{lemma}[Admissible domain]\label{lem:domain}
For $0<A,B\leq 1$, the interval $[L(A,B),R(A,B)]$ is nonempty if and only if
\[
        B\geq B_0(A),
\]
where
\[
        B_0(A)=
        \frac{-A(1-\kappa)+\sqrt{A^2(1-\kappa)^2+8\kappa(1+\kappa)}}{2(1+\kappa)}.
\]
Moreover, if $B\geq B_0(A)$, then
\[
        L(A,B)\leq \frac12\leq R(A,B),
\]
and, for every admissible zero,
\[
        0\leq M(U),N(U)\leq \frac12.
\]
\end{lemma}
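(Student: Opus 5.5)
The plan is to reduce everything to the two one-sided conditions \(L(A,B)\le \tfrac12\) and \(R(A,B)\ge\tfrac12\), and to show that each of them is equivalent to \(B\ge B_0(A)\). We first verify that \(L\le \tfrac12\) is exactly the quadratic condition defining \(B_0\). Indeed, clearing the positive denominators gives
\[
L(A,B)\le\tfrac12
\iff 2\kappa(1+AB)\le (1+\kappa)B(A+B)
\iff (1+\kappa)B^2+A(1-\kappa)B-2\kappa\ge 0 .
\]
The left-hand side is a quadratic in \(B\) with positive leading coefficient and negative value at \(B=0\). It therefore has exactly one positive root, and that root is \(B_0(A)\). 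Hence \(L\le\tfrac12\) if and only if \(B\ge B_0(A)\).

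The second step treats the right endpoint. Using \(1-\eps=B^2/(1+\eps)\) and \(\kappa^2=1-A^2\), we compute
\[
R(A,B)\ge\tfrac12\iff 1-\eps\ge \frac{2\eps\kappa^2}{A(A+B)}
\iff A(A+B)B^2\ge 2\eps(1+\eps)(1-A^2).
\]
We then show that \(B\ge B_0(A)\) implies this inequality. The cleanest route is monotonicity: as \(B\) increases, the left side increases while \(\eps\) (and hence the right side) decreases. It therefore suffices to check the inequality at \(B=B_0(A)\), where the quadratic relation \((1+\kappa)B_0^2=2\kappa-A(1-\kappa)B_0\) can be used to eliminate \(B_0^2\). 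This leaves a one-variable inequality in \(A\), or in \(p\) after setting \(A=\sin p\) and \(\kappa=\cos p\). This is the step I expect to be the main obstacle. If the direct algebra becomes unwieldy, I would fall back on a monotonicity-in-\(A\) argument along the curve \(B=B_0(A)\), checking the endpoints \(A\to0\) and \(A=1\) (where \(\kappa=0\) and \(B_0=0\)) and controlling the derivative in between.

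Granting both steps, the equivalence follows. If \(B\ge B_0(A)\), then \(L\le\tfrac12\le R\), so the interval is nonempty. Conversely, suppose \(B<B_0(A)\), so that \(L>\tfrac12\). In that case I would show \(R<L\), either by proving that the reversed inequality \(R<\tfrac12\) also holds (the same monotonicity computation run in reverse), or by comparing \(L\) and \(R\) directly after multiplying out denominators. This gives the ``only if'' direction.

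Finally, we bound \(M\) and \(N\) on the admissible interval. By construction, \(L\) is the point where \(M(U)=U\), and \(R\) is the point where \(N(U)=1-U\). The function \(M\) is decreasing in \(U\), so for \(L\le U\le R\) we get \(M(U)\le M(L)=L\le\tfrac12\). Moreover \(M(U)\ge0\), because \(R\le P(A,B)\): this follows from \(R\le 1\le P\), the latter since \(1+AB\ge B(A+B)\). Similarly, \(N\) is increasing and nonnegative, so \(N(U)\le N(R)=1-R\le\tfrac12\). This holds in particular at every admissible zero of \(G_{A,B}\), which proves the last claim.
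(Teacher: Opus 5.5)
Your handling of the left endpoint and of the bounds on $M$ and $N$ is fine; the observation $R\le 1\le P$ is a quicker route to $M\ge 0$ than the paper's explicit formula for $P-R$. The gap is the central step. You never show that $B\ge B_0(A)$ forces $R(A,B)\ge\tfrac12$, nor that $B<B_0(A)$ forces $R<L$. The first is deferred as ``the main obstacle'', and the converse is something you ``would show''.

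Your monotonicity reduction also misjudges what has to be checked. Write $\Phi(B)=A(A+B)B^2-2\eps(1+\eps)\kappa^2$. You correctly note that $R\ge\tfrac12\iff\Phi(B)\ge0$ and that $\Phi$ is increasing in $B$, so $R\ge\tfrac12$ holds exactly for $B$ above some threshold $B_1(A)$. The forward direction needs $B_1\le B_0(A)$. Your converse (``run the computation in reverse'') needs $B_1\ge B_0(A)$. Together you need the exact identity $\Phi(B_0(A))=0$, not a one-variable inequality. Proving that identity by eliminating $B_0^2$ is awkward, because $\eps_0=\sqrt{1-B_0^2}$ adds a further nested radical. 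An endpoint-plus-derivative argument in $A$ is also aimed at the wrong target, since the quantity vanishes identically.

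The missing idea is the $A\leftrightarrow B$ symmetry that the paper uses.
\begin{itemize}
\item A direct computation gives $1-R(A,B)=\frac{\eps}{1+\eps}\frac{1+AB}{A(A+B)}=L(B,A)$.
\item $L(A,B)\le\tfrac12$ is equivalent to $\kappa(2+AB-B^2)\le B(A+B)$. Both sides are nonnegative, so squaring gives the equivalent condition $(1-A^2)(2+AB-B^2)^2\le B^2(A+B)^2$.
\item Expanding, this is $4(1+AB)^3\le(A+B)^2(2+AB)^2$, which is symmetric in $A$ and $B$.
\end{itemize}
Hence $L(A,B)\le\tfrac12\iff L(B,A)\le\tfrac12\iff R(A,B)\ge\tfrac12$, and both directions follow at once. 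If $B\ge B_0(A)$, then $L\le\tfrac12\le R$. If $B<B_0(A)$, then $L(A,B)>\tfrac12$ and $L(B,A)>\tfrac12$, so $R<\tfrac12<L$ and the interval is empty. In your notation, this is exactly $\Phi(B_0(A))=0$, the statement your plan needed but did not identify.
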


\begin{proof}
First,
\[
        1-R(A,B)
        =\frac{\eps}{1+
        \eps}\left(1+\frac{\kappa^2}{A(A+B)}\right)
        =\frac{\eps}{1+
        \eps}\frac{1+AB}{A(A+B)}
        =L(B,A).
\]
Hence
\[
        L(A,B)\leq R(A,B)
        \quad\Longleftrightarrow\quad
        L(A,B)+L(B,A)\leq 1.
\]
 Inequality $L(A,B)\leq 1/2$ is equivalent to
\[
        2\kappa(1+AB)\leq (1+
        \kappa)B(A+B),
\]
or
\[
        (1+
        \kappa)B^2+A(1-
        \kappa)B-2\kappa\geq 0.
\]
The left-hand side is increasing in $B$ on $[0,1]$, and its positive root is $B_0(A)$.
A symmetric squaring computation gives
\[
        L(A,B)\leq \frac12
        \quad\Longleftrightarrow\quad
        L(B,A)\leq \frac12.
\]
Indeed, both are equivalent to the nonpositivity of
\[
        (1-A^2)(2+AB-B^2)^2-B^2(A+B)^2,
\]
which is invariant under interchanging $A$ and $B$ after using $A^2+
        \kappa^2=B^2+\eps^2=1$.
Thus
\[
        L(A,B)\leq R(A,B)
        \quad\Longleftrightarrow\quad
        L(A,B)\leq \frac12
        \quad\Longleftrightarrow\quad
        B\geq B_0(A).
\]
If $B\geq B_0(A)$, then $L(A,B)\leq 1/2$ and $L(B,A)\leq 1/2$, so $R(A,B)=1-L(B,A)\geq 1/2$.

We shall also use that the right endpoint lies below $P=(1+AB)/(B(A+B))$. Indeed,
\[
        P-R(A,B)=
        \frac{\eps\,(A\eps+A+B)}{AB(A+B)(1+\eps)}\geq 0.
\]
Thus, $U\leq R(A,B)$ implies $P-U\geq0$, so the formula $M(U)=\kappa(P-U)$ is nonnegative on the admissible interval.

Finally, $M$ is decreasing and $N$ is increasing. Hence, on the admissible interval,
\[
        M(U)\leq M(L)=L\leq \frac12,
        \qquad
        N(U)\leq N(R)=1-R\leq \frac12.
\]
The lower bounds are precisely the admissibility conditions. This proves the lemma.
\end{proof}

\begin{lemma}[Monotonicity]\label{lem:monotone}
The function $G_{A,B}$ is strictly increasing on every non-degenerate admissible interval.
\end{lemma}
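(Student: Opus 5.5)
We differentiate $G_{A,B}$ directly. Since $M'(U)=-\kappa$ and $N'(U)=\varepsilon$, we get
\[
        G'_{A,B}(U)=\pi\Bigl((A+B)\sin(\pi U)+B\kappa\sin(\pi M(U))+A\varepsilon\sin(\pi N(U))\Bigr).
\]
Strict monotonicity then comes down to showing that this bracket is positive on a non-degenerate admissible interval $[L(A,B),R(A,B)]$. We argue term by term.

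By Lemma~\ref{lem:domain}, every $U$ in the admissible interval satisfies $0\le M(U),N(U)\le \tfrac12$. Hence $\sin(\pi M(U))\ge0$ and $\sin(\pi N(U))\ge0$, and since $B\kappa,A\varepsilon\ge0$, the last two terms are nonnegative. For the first term, we show $0<U<1$ on the interval. The left endpoint satisfies $L(A,B)>0$ whenever $\kappa>0$. If $\kappa=0$, that is $A=1$, then $L=0$, but then $M\equiv0$ and the admissibility condition $M\le U$ holds trivially, so $U=0$ must be handled separately. The right endpoint satisfies $1-R(A,B)=L(B,A)\ge0$.

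So the only place where $\sin(\pi U)$ can vanish is at the endpoints $U=0$ or $U=1$, which occur only in the degenerate cases $A=1$ or $B=1$. Even there, $\sin(\pi U)>0$ at every interior point of the interval, because the interval lies inside $[0,1]$ by Lemma~\ref{lem:domain}. In fact $L\le\tfrac12\le R$, so any non-degenerate interval contains points of $(0,1)$.

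It follows that $G'_{A,B}\ge \pi(A+B)\sin(\pi U)>0$ on the interior of the interval, and $G'_{A,B}\ge0$ at the endpoints. A continuous function whose derivative is positive except possibly at the two endpoints is strictly increasing on the closed interval, which gives the claim.

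The one point requiring care is checking that $U\in[0,1]$ throughout the interval. This is not automatic from the formulas for $L$ and $R$ alone. It follows from $L\ge0$ (clear, since $\kappa\ge0$) together with $R=1-L(B,A)\le1$, which uses $L(B,A)\ge0$; both were established in the proof of Lemma~\ref{lem:domain}. Once that is in place, the rest of the argument is immediate.
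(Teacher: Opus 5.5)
Your proof is correct and follows the paper's argument: differentiate $G_{A,B}$ using $M'=-\kappa$ and $N'=\varepsilon$, note that all three sine terms are nonnegative on the admissible interval, and deduce strict monotonicity from positivity of the derivative on the interior. You are more explicit than the paper, which only says that ``at least one term is positive'' on every nontrivial subinterval; you identify that term as $(A+B)\sin(\pi U)>0$ for $U$ in the interior of $[L,R]\subseteq[0,1]$, using $L\ge 0$ and $1-R=L(B,A)\ge 0$.
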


\begin{proof}
Differentiating and using $M'(U)=-\kappa$, $N'(U)=\eps$, one obtains
\[
\begin{aligned}
        \frac1\pi G'_{A,B}(U)
        &=(A+B)\sin(\pi U)
        +B\kappa\sin(\pi M(U))
        +A\eps\sin(\pi N(U)).
\end{aligned}
\]
On the admissible interval all three sine factors are nonnegative. On every nontrivial subinterval at least one term is positive, so $G_{A,B}$ is strictly increasing.
\end{proof}

\subsection{Exact phase elimination}

Put
\[
        h=\frac{\alpha}{2}.
\]
For the arc centered at $\phi$ with half-length $s$, the harmonic-measure formula is
\[
        \cot(\pi\omega)
        =\frac{(1+r^2)\cos s-2r\cos(t_\circ-\phi)}{(1-r^2)\sin s}.
\]
Applying this formula to $I_1$ and $I_3$, whose centers are $h$ and $h+\pi$, gives
\[
        \frac{2r}{1+r^2}\cos(t_\circ-h)
        =\cos h\,\frac{\sin(\pi V)}{\sin(\pi U)}.
\]
Applying the same formula to $I_2$ and $I_4$, whose centers are $h+\pi/2$ and $h+3\pi/2$, gives
\[
        \frac{2r}{1+r^2}\cos\left(t_\circ-h-\frac\pi2\right)
        =-\sin h\,\frac{\sin(\pi T)}{\sin(\pi U)}.
\]
Consequently,
\[
\frac{2r}{1+r^2}\cos(t_\circ-\delta)
=
\frac{
\cos h\sin(\pi V)\cos(\delta-h)-
\sin h\sin(\pi T)\sin(\delta-h)}{\sin(\pi U)}.
\]

\begin{lemma}[Phase of $a$]\label{lem:phase-a}
Assume $AB<1$. With the notation above,
\[
        \cos(\delta-h)
        =-
        \frac{c_p\sqrt B}{\sqrt{(1-AB)(A+B)}},
\]
and
\[
        \sin(\delta-h)
        =-
        \frac{d_q\sqrt A}{\sqrt{(1-AB)(A+B)}}.
\]
\end{lemma}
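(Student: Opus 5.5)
The plan is to read off the zero $a$ of the Gauss-map automorphism $g$ directly from the explicit fixed-arc Weierstrass data of \cite{KalajMelentijevic2026}. We then rotate by $e^{-\ii h}$, with $h=\alpha/2$ the bisector of $I_1$, and simplify. In the fixed-arc normalization the Beltrami coefficient is $z^2$, and $h'$ has the Scherk form
\[
        h'(z)=\frac{c}{(1-z^2)(z^2-e^{2\ii\alpha})}
\]
with an explicit constant $c$. The quantity $\varphi$ and the Gauss map $g$ are obtained from $h'$ and $g_{\rm harm}'=z^2h'$ by the standard correspondence between a harmonic projection and the Enneper--Weierstrass data. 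In that correspondence $g$ is a M\"obius map whose zero $a$ is the point where the normal is vertical. I will take from the reference the closed expression for $a$, or equivalently for the pair $(\vartheta,a)$, in terms of $p,q,\beta$. I will then express everything through the quantities $A,B,c_p,d_q$ and
\[
        \cos\alpha=\frac{B-A}{A+B},\qquad \sin\alpha=\frac{2\mu}{A+B},\qquad \tan^2 h=\frac AB .
\]

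Concretely, I will compute $\operatorname{Re}(a e^{-\ii h})$ and $\operatorname{Im}(a e^{-\ii h})$ and divide by $|a|=\sqrt{(1-\mu)/(1+\mu)}$. I expect the real part to be proportional to $c_p\cos h$ and the imaginary part to $d_q\sin h$. The reason is that the zero equation already pairs $V$, the $I_1/I_3$ asymmetry, with $c_p$, and $T$, the $I_2/I_4$ asymmetry, with $d_q$. The bisectors of these two pairs are $h$ and $h+\pi/2$. Since $\cos h=\sqrt{B/(A+B)}$ and $\sin h=\sqrt{A/(A+B)}$, this produces the factors $\sqrt B$ and $\sqrt A$ in the statement.

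As a built-in consistency check, the claimed formulas satisfy
\[
        \frac{c_p^2B+d_q^2A}{(1-AB)(A+B)}=\frac{(A+B)(1-AB)}{(1-AB)(A+B)}=1,
\]
using $c_p^2=1-A^2$ and $d_q^2=1-B^2$. So it suffices to establish the two ratios up to a common positive factor, for example $\tan(\delta-h)=d_q\sqrt A/(c_p\sqrt B)$, together with the sign of $\cos(\delta-h)$. The hypothesis $AB<1$ is exactly $\mu<1$, that is $a\neq0$, so that $\delta$ is defined.

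I expect the main obstacle to be the sign and convention bookkeeping. This includes the rotation $e^{\ii\vartheta}$ and the parameter $\beta$ in the vertices $P_j$. It also includes the choice of upward orientation, which selects $a$ rather than $1/\bar a$, and the overall minus sign in both formulas. To settle these I will first fix the sign by a symmetric test case, $p$ near $\pi/2$ where $c_p\to0$ so that $\delta-h\to\pm\pi/2$. I will then invoke continuity in $(p,q)$ over the connected admissible set $\mathcal S$, on which $a$ never vanishes since $AB<1$, to propagate the sign. If the reference does not provide $a$ explicitly, I will characterize it instead as the unique point where $n_3=1$. Via the Weierstrass form, the normal at $a$ is vertical, hence the conjugate of the height function has a critical point there. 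I will locate this point by the same harmonic-measure formula for arcs used in the phase-elimination identities above.
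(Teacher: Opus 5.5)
Your main line is the paper's proof, with the signs coming straight out of the algebra. The paper quotes the Kalaj--Melentijevi\'c formula $a=\frac{d_q-c_p-\ii\sin q}{1-\cos q+2\sqrt{AB}+\ii(B-A)}$. Using $\sin q=Ad_q+Bc_p$ and $\cos q=c_pd_q-AB$, it rewrites this as $a=\frac{Ad_q-Bc_p-\ii\sqrt{AB}(c_p+d_q)}{(1+\sqrt{AB})(A+B)}$. It then multiplies by $e^{-\ii h}=(\sqrt B-\ii\sqrt A)/\sqrt{A+B}$ to get $ae^{-\ii h}=-\frac{c_p\sqrt B+\ii d_q\sqrt A}{(1+\sqrt{AB})\sqrt{A+B}}$, and divides by $|a|=\sqrt{(1-\sqrt{AB})/(1+\sqrt{AB})}$. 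Your test-case-plus-continuity step is therefore unnecessary; it would need the value of $a$ at the test point anyway. Also, $a$ depends only on $p,q$, not on $\beta$.

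What you should correct is your account of where $a$ comes from, because it fails as soon as you must derive $a$ rather than quote it. You take fixed arcs together with Beltrami coefficient $z^2$ and your $h'=c/((1-z^2)(z^2-e^{2\ii\alpha}))$. The dilatation is the square of the Gauss-map parameter, as in the proof of Theorem~\ref{thm:central}, so this gives $g=\pm z$, hence $a=0$. Then $W(0)=1$ rather than $1/\mu$, which contradicts $|a|>0$ for $AB<1$. The wording of Remark~\ref{rem:normalizations} invites this reading, but the curvature formula behind Theorem~\ref{thm:exact} forces $\varphi$ to carry the factor $(1-\overline a z)^2$. So in the fixed-arc normalization the dilatation is $e^{2\ii\vartheta}\bigl(\frac{z-a}{1-\overline a z}\bigr)^2$, not $z^2$.

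Your fallback also does not work: the harmonic-measure formula for arcs does not locate $a$. A self-contained derivation is algebraic instead:
\begin{itemize}
\item Write $f_z=\sum_k r_k/(z-\zeta_k)$, where $\zeta_k$ is the initial point of $I_k$ and $r_k$ is proportional to $P_k-P_{k-1}$ (indices mod $4$).
\item After the substitution $z=e^{\ii h}w$, the quadratic numerator of $f_z$ is proportional to $(\sqrt A+\ii\sqrt B\gamma)w^2+\frac{2(d_q\gamma-c_p)}{\sqrt{A+B}}w-(\sqrt A-\ii\sqrt B\gamma)$, with $\gamma=e^{2\ii\beta}$.
\item The perfect-square condition needed for a minimal graph gives $(1+AB)\cos 2\beta=c_pd_q$, hence $\sin2\beta=\pm(A+B)/(1+AB)$.
\item The plus sign is forced, since the minus sign puts the double zero of $f_z$ inside $\D$, at modulus $\sqrt{(1-\sqrt{AB})/(1+\sqrt{AB})}$.
\item The double root $e^{\ii h}w_0$ then equals $1/\overline a$, and $w_0/|w_0|$ is exactly the unit vector $-\frac{c_p\sqrt B+\ii d_q\sqrt A}{\sqrt{(1-AB)(A+B)}}$ of the lemma.
\end{itemize}
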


\begin{proof}
The explicit formula for the parameter $a$ in the two-parameter Scherk family gives
\[
        a=\frac{d_q-c_p-\ii\sin q}{1-
        \cos q+2\sqrt{AB}+\ii(B-A)}.
\]
Using
\[
        \sin q=Ad_q+Bc_p,
        \qquad
        \cos q=c_pd_q-AB,
\]
one simplifies this expression to
\[
        a=\frac{Ad_q-Bc_p-
        \ii\sqrt{AB}(c_p+d_q)}{(1+
        \sqrt{AB})(A+B)}.
\]
Also,
\[
        |a|=\sqrt{\frac{1-
        \sqrt{AB}}{1+
        \sqrt{AB}}},
        \qquad
        e^{-\ii h}=\frac{\sqrt B-
        \ii\sqrt A}{\sqrt{A+B}}.
\]
Therefore,
\[
        e^{\ii(\delta-h)}
        =\frac{a}{|a|}e^{-\ii h}
        =-
        \frac{c_p\sqrt B+
        \ii d_q\sqrt A}{\sqrt{(1-AB)(A+B)}}.
\]
Taking real and imaginary parts proves the claim.
\end{proof}

\begin{lemma}[Derivative form of the master inequality]\label{lem:derivative-master}
Let $U$ be an admissible zero of $G_{A,B}$. Then
\[
        W^2|K|\leq \frac{\pi^2}{2}
\]
is equivalent to
\[
        \frac1\pi G'_{A,B}(U)\geq \sqrt{2(1+AB)}.
\]
\end{lemma}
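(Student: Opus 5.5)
The plan is to start from the master inequality derived in the previous subsection, namely
\[
        \sin(\pi U)\Bigl(1-\sqrt{1-\mu^2}\,\tfrac{2r}{1+r^2}\cos(t_\circ-\delta)\Bigr)\geq \frac{\sqrt{2(1+\mu^2)}}{A+B},
\]
which is already known to be equivalent to $W^2|K|\leq\pi^2/2$. The goal is to show that, after multiplication by $A+B$, its left-hand side equals exactly $\frac1\pi G'_{A,B}(U)$ as computed in Lemma~\ref{lem:monotone}. The right-hand side is then $\sqrt{2(1+AB)}$ because $\mu^2=AB$. No inequality is lost; the whole argument is an identity computation at the admissible zero.

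First, I treat the generic case $AB<1$. I use the phase-elimination identity
\[
\sin(\pi U)\,\tfrac{2r}{1+r^2}\cos(t_\circ-\delta)=\cos h\sin(\pi V)\cos(\delta-h)-\sin h\sin(\pi T)\sin(\delta-h).
\]
Into it I substitute $\cos h=\sqrt{B/(A+B)}$ and $\sin h=\sqrt{A/(A+B)}$, which hold because $h=\alpha/2\in(0,\pi/2)$ and $\tan^2 h=A/B$, together with the values of $\cos(\delta-h)$ and $\sin(\delta-h)$ from Lemma~\ref{lem:phase-a}. The factor $\sqrt{1-AB}=\sqrt{1-\mu^2}$ cancels exactly, giving
\[
\sqrt{1-\mu^2}\,\sin(\pi U)\tfrac{2r}{1+r^2}\cos(t_\circ-\delta)=\frac{-Bc_p\sin(\pi V)+Ad_q\sin(\pi T)}{A+B}.
\]
Hence $(A+B)$ times the left-hand side of the master inequality equals
\[
(A+B)\sin(\pi U)+Bc_p\sin(\pi V)-Ad_q\sin(\pi T).
\]

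Second, I remove the signs of $c_p$ and $d_q$ using the affine formulas from the zero equation, $V=c_p(P-U)$ and $T=-d_q\bigl(U+c_p^2/(A(A+B))\bigr)$. On the admissible interval, Lemma~\ref{lem:domain} gives $P-U\geq0$, and the second bracket is clearly nonnegative. Since $x\mapsto x\sin(\pi x X)$ is even in $x$, we get
\[
c_p\sin(\pi c_pX)=\kappa\sin(\pi\kappa X)=\kappa\sin(\pi M(U)),
\]
and similarly
\[
-d_q\sin(\pi T)=d_q\sin(\pi d_qY)=\eps\sin(\pi N(U)).
\]
The expression therefore becomes
\[
(A+B)\sin(\pi U)+B\kappa\sin(\pi M(U))+A\eps\sin(\pi N(U))=\tfrac1\pi G'_{A,B}(U).
\]
Finally, I must check that the $U$ coming from the surface is an admissible zero of $G_{A,B}$, and that any admissible zero is the $U$ of $z_\circ$. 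The first holds by the cross-ratio Lemma~\ref{lem:hm-crossratio} and the positivity constraints. The second follows from strict monotonicity (Lemma~\ref{lem:monotone}), since the admissible zero is unique when the interval is nondegenerate; a degenerate interval is a single point, so uniqueness is automatic there.

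The degenerate case $AB=1$, i.e.\ $A=B=1$ and $\kappa=\eps=0$, is excluded from Lemma~\ref{lem:phase-a}. In that case the phase term is multiplied by $\sqrt{1-\mu^2}=0$. The master inequality reads $2\sin(\pi U)\geq2$, and $\frac1\pi G'=2\sin(\pi U)$ with $\sqrt{2(1+AB)}=2$, so the equivalence holds trivially. The main point requiring care is the sign bookkeeping in the second step: one must check that the signs in Lemma~\ref{lem:phase-a} combine with the orientation of $V$ and $T$ so that both correction terms appear with a plus sign, as in $G'$. I would verify this by tracking $c_p$ and $d_q$ symbolically, as above, rather than assuming they are positive.
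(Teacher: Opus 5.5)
Your proposal is correct and follows the paper's proof. You substitute $\cos h,\sin h$ and Lemma~\ref{lem:phase-a} into the phase-elimination identity, cancel $\sqrt{1-AB}$, and recognize $(A+B)$ times the left side of the master inequality as $\frac1\pi G'_{A,B}(U)$, treating $A=B=1$ separately. Your explicit sign bookkeeping via $V=c_pX$, $T=-d_qY$ and the evenness of $x\mapsto x\sin(\pi xX)$ spells out a step the paper leaves implicit, and your uniqueness remark via Lemma~\ref{lem:monotone} correctly justifies identifying the admissible zero with the $U$ of $z_\circ$.
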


\begin{proof}
If $AB=1$, then $A=B=1$, $\kappa=\eps=0$, the admissible zero is $U=1/2$, and the assertion reduces to
\[
        2\sin(\pi/2)=\sqrt{2(1+1)}.
\]
Thus, we assume $AB<1$.

Since
\[
        \tan^2 h=\frac AB,
        \qquad
        \cos h=\sqrt{\frac B{A+B}},
        \qquad
        \sin h=\sqrt{\frac A{A+B}},
\]
Lemma~\ref{lem:phase-a} and the exact phase formula yield
\[
\begin{aligned}
&-\sqrt{1-AB}\bigl[
        \cos h\sin(\pi V)\cos(\delta-h)
        -\sin h\sin(\pi T)\sin(\delta-h)
        \bigr]             \\
&\hspace{4cm}=
        \frac{B\kappa\sin(\pi M)+A\eps\sin(\pi N)}{A+B}.
\end{aligned}
\]
Substitution into the master inequality gives
\[
        \sin(\pi U)
        +\frac{B\kappa\sin(\pi M)+A\eps\sin(\pi N)}{A+B}
        \geq
        \frac{\sqrt{2(1+AB)}}{A+B}.
\]
Multiplication by $A+B$ gives
\[
\begin{aligned}
 &(A+B)\sin(\pi U)
        +B\kappa\sin(\pi M)
        +A\eps\sin(\pi N)      \\
 &\hspace{4cm}\geq \sqrt{2(1+AB)}.
\end{aligned}
\]
By the derivative formula in Lemma~\ref{lem:monotone}, this is precisely
\[
        \frac1\pi G'_{A,B}(U)\geq \sqrt{2(1+AB)}.
\]
\end{proof}

\section{Proof of the scalar derivative inequality}\label{sec:scalar-proof}

This section proves Theorem~\ref{thm:scalar}.  The proof is elementary but rather rigid.  The idea is to replace the sine inequality by two linear lower bounds and then to obtain those lower bounds from a barrier point for the monotone function $G_{A,B}$.

Throughout this section we assume
\[
        0<A,B\leq 1,
        \qquad
        B\geq B_0(A),
\]
and we write
\[
        \kappa=\sqrt{1-A^2},
        \qquad
        \eps=\sqrt{1-B^2},
        \qquad
        P=\frac{1+AB}{B(A+B)}.
\]
Thus
\[
        M(U)=\kappa(P-U),
\]
\[
        N(U)=\eps\left(U+\frac{\kappa^2}{A(A+B)}\right),
\]
and
\[
        G_{A,B}(U)=B\cos(\pi M(U))-A\cos(\pi N(U))-(A+B)\cos(\pi U).
\]

Let $U$ be the admissible zero of $G_{A,B}$ and set
\[
\begin{aligned}
        S&=(A+B)\sin(\pi U)
        +B\kappa\sin(\pi M(U))
        +A\eps\sin(\pi N(U)).
\end{aligned}
\]
By Lemma~\ref{lem:domain},
\[
        0\leq M(U),N(U)\leq \frac12.
\]
Using
\[
        \sin(\pi x)\geq 2x,
        \qquad 0\leq x\leq \frac12,
\]
and
\[
        \sin(\pi U)\geq 2\min\{U,1-U\},
        \qquad 0\leq U\leq 1,
\]
we obtain
\[
        S\geq 2\left[(A+B)\min\{U,1-U\}+B\kappa M(U)+A\eps N(U)\right].
\]
Therefore, it is enough to prove the two linear estimates
\begin{equation}\label{eq:HR-linear}
        (A+B)(1-U)+B\kappa M(U)+A\eps N(U)
        \geq \frac12\sqrt{2(1+AB)},
\end{equation}
\begin{equation}\label{eq:HL-linear}
        (A+B)U+B\kappa M(U)+A\eps N(U)
        \geq \frac12\sqrt{2(1+AB)}.
\end{equation}

\subsection{The right linear estimate}

Put
\[
        x=P-U.
\]
Then
\[
        M(U)=\kappa x.
\]
Furthermore,
\[
        P+\frac{\kappa^2}{A(A+B)}
        =\frac{1+AB}{B(A+B)}+
        \frac{1-A^2}{A(A+B)}
        =\frac1{AB}.
\]
Hence
\[
        N(U)=\eps\left(\frac1{AB}-x\right).
\]
Define
\[
        H_R=(A+B)(1-U)+B\kappa M(U)+A\eps N(U).
\]
Substituting $U=P-x$, $M=\kappa x$, and $N=\eps(1/(AB)-x)$ gives the exact simplification
\[
        H_R=B(2+AB-A^2)x.
\]
Let
\[
        C=2+AB-A^2,
        \qquad
        \sigma=\sqrt{2(1+AB)}.
\]
Then \eqref{eq:HR-linear} is equivalent to
\[
        BCx\geq \frac\sigma2.
\]
Set
\[
        x_*:=\frac{\sigma}{2BC},
        \qquad
        U_*:=P-x_*.
\]
It remains to prove $U\leq U_*$. If $U_*\geq R(A,B)$, this is immediate. Assume therefore that
\[
        U_*<R(A,B).
\]
We shall prove
\[
        G_{A,B}(U_*)\geq 0.
\]
Since $G_{A,B}$ is strictly increasing on the admissible interval and $G_{A,B}(U)=0$, this implies $U\leq U_*$.

First, we check that $U_*$ lies in the admissible interval. We prove
\[
        U_*\geq \frac12.
\]
This is equivalent to
\[
        x_*\leq P-\frac12.
\]
But
\[
        P-\frac12=\frac{2+AB-B^2}{2B(A+B)}.
\]
Thus, it suffices to prove
\[
        \sigma(A+B)\leq C(2+AB-B^2).
\]
Since $\sigma\leq 2$, it is enough to know
\[
        2(A+B)\leq C(2+AB-B^2),
\]
that is,
\[
        Y(A,B):=(2+AB-A^2)(2+AB-B^2)-2(A+B)\geq 0.
\]
This is Lemma~\ref{lem:bernstein-Y}. Hence $U_*\geq 1/2$. By Lemma~\ref{lem:domain}, $L(A,B)\leq 1/2$, so $U_*\geq L(A,B)$.  Together with the standing assumption $U_*<R(A,B)$, this shows that $U_*$ is admissible.

Put
\[
        y=1-U_*.
\]
Because $U_*\geq 1/2$ and $U_*<R\leq 1$, we have
\[
        0\leq y\leq \frac12.
\]
At $U=U_*$,
\[
        M(U_*)=\kappa x_*,
        \qquad
        U_*=1-y,
\]
so
\[
        -\cos(\pi U_*)=\cos(\pi y).
\]
Therefore
\[
        G_{A,B}(U_*)
        =B\cos(\pi\kappa x_*)-A\cos(\pi N(U_*))+(A+B)\cos(\pi y).
\]
Using $\cos(\pi N(U_*))\leq 1$, one has
\[
        G_{A,B}(U_*)
        \geq
        B\cos(\pi\kappa x_*)-A+(A+B)\cos(\pi y).
\]
For $0\leq y\leq 1/2$,
\[
        \cos(\pi y)\geq 1-2y,
\]
and, for all real $t$,
\[
        \cos t\geq 1-\frac{t^2}{2}.
\]
Hence
\[
        \cos(\pi\kappa x_*)\geq 1-\frac{\pi^2}{2}\kappa^2 x_*^2.
\]
Thus
\[
        G_{A,B}(U_*)
        \geq
        B\left(1-\frac{\pi^2}{2}\kappa^2 x_*^2\right)
        -A+(A+B)(1-2y).
\]
Now
\[
        y=1-U_*=1-P+x_*.
\]
Since
\[
        1-P=-\frac{1-B^2}{B(A+B)}=-\frac{\eps^2}{B(A+B)},
\]
we get
\[
        y=x_*-\frac{\eps^2}{B(A+B)}.
\]
Substitution yields
\[
        G_{A,B}(U_*)
        \geq
        \frac2B-2(A+B)x_*-\frac{\pi^2}{2}B\kappa^2 x_*^2.
\]
Using $x_*=\sigma/(2BC)$ and $\sigma^2=2(1+AB)$, the right-hand side is nonnegative provided
\[
        2C^2-C(A+B)\sigma-
        \frac{\pi^2}{4}(1-A^2)(1+AB)\geq 0.
\]
Since $\pi^2<10$, it is enough to prove
\[
        2C^2-C(A+B)\sigma-
        \frac52(1-A^2)(1+AB)\geq 0.
\]
We use
\[
        \sigma\leq 2,
        \qquad
        2-\sigma=
        \frac{4-\sigma^2}{2+
        \sigma}
        =\frac{2(1-AB)}{2+
        \sigma}
        \geq \frac{1-AB}{2}.
\]
Therefore
\[
\begin{aligned}
        2C-(A+B)\sigma
        &=2(C-A-B)+(A+B)(2-
        \sigma)        \\
        &\geq 2(C-A-B)+\frac{A+B}{2}(1-AB).
\end{aligned}
\]
It remains to use Lemma~\ref{lem:bernstein-Z}, which asserts the nonnegativity of
\[
        Z(A,B)
        :=C\left[2(C-A-B)+\frac{A+B}{2}(1-AB)\right]
        -\frac52(1-A^2)(1+AB).
\]
Thus $G_{A,B}(U_*)\geq 0$. Since $G_{A,B}$ is increasing, we have $U\leq U_*$. Equivalently, $x=P-U\geq x_*$. Hence
\[
        H_R=BCx\geq BCx_*=
        \frac\sigma2.
\]
This proves \eqref{eq:HR-linear}.

\subsection{The left linear estimate}

The second estimate follows by symmetry. Under
\[
        A\leftrightarrow B,
        \qquad
        U\leftrightarrow 1-U,
\]
one has
\[
        M_{B,A}(1-U)=N_{A,B}(U),
        \qquad
        N_{B,A}(1-U)=M_{A,B}(U),
\]
and
\[
        G_{B,A}(1-U)=-G_{A,B}(U).
\]
Thus, if $U$ is the admissible zero for $(A,B)$, then $1-U$ is the admissible zero for $(B,A)$. By Lemma~\ref{lem:domain}, the swapped pair also satisfies the domain condition. Applying \eqref{eq:HR-linear} to $(B,A)$ gives
\[
        (A+B)U+A\eps N(U)+B\kappa M(U)
        \geq \frac12\sqrt{2(1+AB)},
\]
which is exactly \eqref{eq:HL-linear}.

\subsection{Completion of Theorem~\ref{thm:scalar}}

The two estimates \eqref{eq:HR-linear} and \eqref{eq:HL-linear} imply
\[
        (A+B)\min\{U,1-U\}+B\kappa M(U)+A\eps N(U)
        \geq \frac\sigma2.
\]
Combining this with the sine lower bound gives
\[
        S\geq 2\cdot \frac\sigma2=\sigma=
        \sqrt{2(1+AB)}.
\]
This is the desired scalar derivative inequality, and Theorem~\ref{thm:scalar} is proved.

\section{Completion of the Hopf estimate}\label{sec:completion}

\begin{proof}[Proof of Theorem~$\ref{thm:scherk-main}$]
Let $(p,q)$ be an admissible pair in the Scherk-type comparison family, and let $\zc$ be the distinguished zero of the harmonic projection. The harmonic-measure construction produces an admissible zero $U$ of $G_{A,B}$ with
\[
        A=\sin p,
        \qquad
        B=\sin(q-p).
\]
By Lemma~\ref{lem:domain}, the corresponding admissible interval is nonempty, so $B\geq B_0(A)$. The scalar theorem gives
\[
        \frac1\pi G'_{A,B}(U)\geq \sqrt{2(1+AB)}.
\]
By Lemma~\ref{lem:derivative-master}, this is equivalent to the Hopf master inequality, and hence 
\[
        W^2|K|\leq \frac{\pi^2}{2}
\]
for the Scherk-type comparison graph.
\end{proof}

\begin{proof}[Proof of Theorem~$\ref{thm:main}$]
By scaling, it suffices to consider the unit disk. Translate vertically so that the point above the center is $(0,0,0)$, and rotate the horizontal coordinates so that the mixed second derivative at the center is zero. Choose the normal parameter $w$ corresponding to the unit normal at the center.

The Scherk comparison theorem gives a comparison graph $S^\diamond$ with the same normal at the distinguished point and with
\[
        W_S(\xi)^2 |K_S(\xi)|
        <
        W_{S^\diamond}(\xi)^2 |K_{S^\diamond}(\xi)|.
\]
By Theorem~\ref{thm:scherk-main},
\[
        W_{S^\diamond}(\xi)^2 |K_{S^\diamond}(\xi)|
        \leq \frac{\pi^2}{2}.
\]
Thus
\[
        W_S(\xi)^2 |K_S(\xi)|
        <\frac{\pi^2}{2}
\]
for the unit disk. If the original disk has radius $R$, the homothety to the unit disk leaves $W$ unchanged and multiplies $|K|$ by $R^2$. Therefore
\[
        W_S(\xi)^2 |K_S(\xi)|
        < \frac{\pi^2}{2R^2}.
\]

The constant is sharp because it is already sharp in the horizontal tangent-plane class of Finn and Osserman \cite{FinnOsserman1964}.  In that class $W=1$ at the center, so the normalized estimate reduces to their sharp curvature estimate.  Hence the constant $\pi^2/2$ cannot be lowered.
\end{proof}

\subsection{Figures}

The following figures illustrate the admissible parameter domain and the corresponding numerical behavior of the normalized curvature in the Scherk-type family. The estimate itself is proved above; the figures are included only to visualize the parameter region and the shape of the curvature function.

\begin{figure}[p]
\centering

\IfFileExists{Plot2D.pdf}{%
\includegraphics[width=0.62\textwidth,height=0.36\textheight,keepaspectratio]{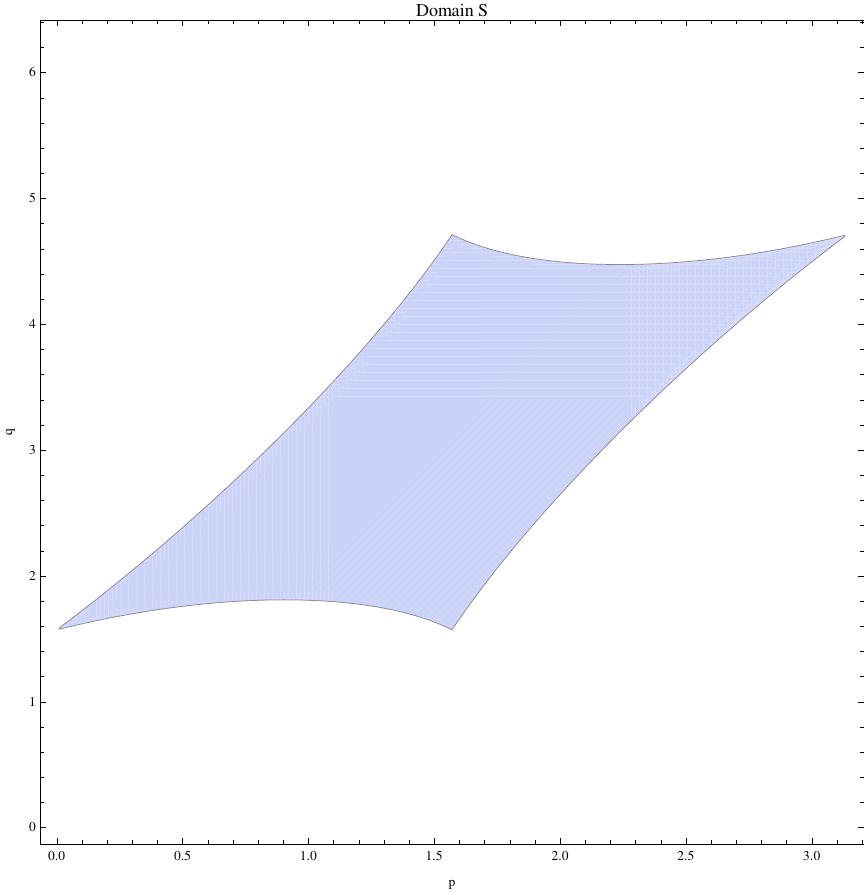}%
}{%
\fbox{\begin{minipage}[c][2.2in][c]{0.75\textwidth}\centering Plot2D.pdf not found.\end{minipage}}%
}
\caption{The admissible parameter domain $\mathcal S$ in the $(p,q)$-plane.}
\label{fig:domain}

\vspace{0.6cm}

\IfFileExists{plot3D.pdf}{%
\includegraphics[width=0.72\textwidth,height=0.42\textheight,keepaspectratio]{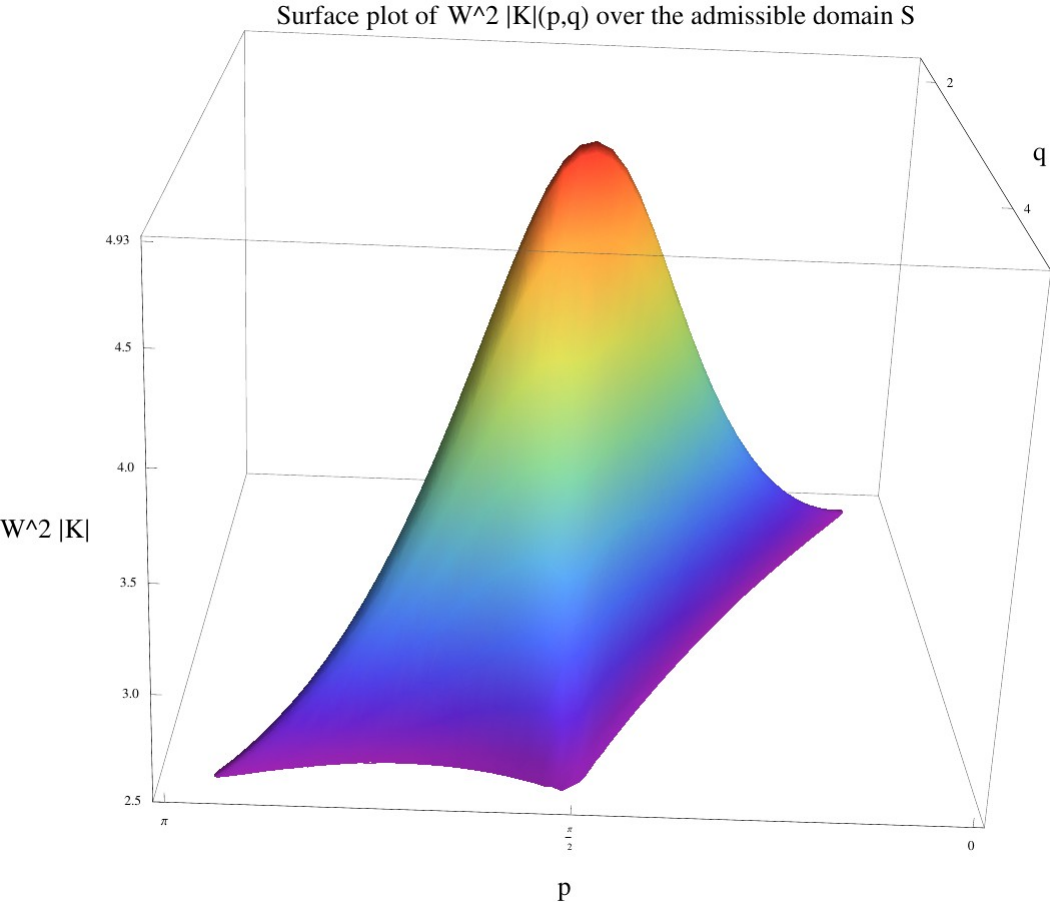}%
}{%
\fbox{\begin{minipage}[c][2.2in][c]{0.75\textwidth}\centering plot3D.pdf not found.\end{minipage}}%
}
\caption{A numerical plot of $W^2|K|(p,q)$ over the admissible domain. It illustrates the proved two-sided estimate
$\pi^2/4\leq W^2|K|\leq \pi^2/2$ in the Scherk-type comparison family.}
\label{fig:surface}

\end{figure}

\clearpage
\appendix

\section{An auxiliary odd coefficient estimate}\label{app:odd}

This appendix records a sharp coefficient estimate that gives a quick proof of the Hopf estimate in a centrally symmetric boundary-regular setting. It is not used in the proof of the main theorem.

\begin{lemma}[Odd Hall averaging inequality]\label{lem:odd-Hall}
Let $\theta:\mathbb R\to\mathbb R$ be nondecreasing and satisfy
\[
        \theta(x+\pi)=\theta(x)+\pi .
\]
For $0\leq \tau\leq \pi/2$, define
\[
        J(\tau)
        =
        \frac1{2\pi}\int_0^{2\pi}
        \sin^2\frac{\theta(x+\tau)-\theta(x-\tau)}2\,\dd x .
\]
If $M$ is nonnegative and nonincreasing on $[0,\pi/2]$, then
\[
        \int_0^{\pi/2} M(\tau)J(\tau)\,\dd\tau
        \leq
        \frac2\pi
        \int_0^{\pi/2} M(\tau)\tau\,\dd\tau .
\]
\end{lemma}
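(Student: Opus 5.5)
The plan is to expand $J$ in Fourier series and reduce the claim to a coefficient estimate weighted by $M$. Write $\sin^2(s/2)=\tfrac12(1-\cos s)$ and put $F(x)=e^{\ii\theta(x)}$. Then
\[
        J(\tau)=\frac12-\frac12\operatorname{Re}\frac1{2\pi}\int_0^{2\pi}F(x+\tau)\overline{F(x-\tau)}\,\dd x .
\]
The condition $\theta(x+\pi)=\theta(x)+\pi$ says $F(x+\pi)=-F(x)$, so $F$ has only odd Fourier modes, $F(x)=\sum_{k\ \mathrm{odd}}c_ke^{\ii kx}$, with $\sum|c_k|^2=1$. Parseval then gives
\[
        J(\tau)=\frac12-\frac12\sum_{k\ \mathrm{odd}}|c_k|^2\cos(2k\tau)
        =\sum_{k\ \mathrm{odd}}|c_k|^2\sin^2(k\tau).
\]
The identity map $\theta(x)=x$ gives $J(\tau)=\sin^2\tau$, which is not the extremal. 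The right-hand side $\tfrac2\pi\tau$ is the average of $\sin^2$ against the ``flat'' distribution of mass. It should be attained in the limit by step functions $\theta$ jumping by $\pi$ at two antipodal points, for which $J(\tau)=2\tau/\pi$ exactly. So the bound is the statement that step maps are extremal.

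The key steps, in order, are as follows.

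\textbf{Reduction to indicator weights.} Since $M\ge0$ is nonincreasing, write $M(\tau)=\int \mathbf 1_{[0,s]}(\tau)\,\dd\nu(s)$ plus a constant multiple of $\mathbf 1_{[0,\pi/2]}$, with $\nu\ge0$ (layer-cake representation). It then suffices to prove, for every $0\le s\le\pi/2$,
\[
        \int_0^{s}J(\tau)\,\dd\tau\le\frac{s^2}{\pi}.
\]

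\textbf{Pointwise bound attempt.} I would first try to prove the stronger pointwise inequality $J(\tau)\le 2\tau/\pi$ directly, using monotonicity of $\theta$. Set $d(x)=\theta(x+\tau)-\theta(x-\tau)\in[0,\pi]$; the upper bound holds because $\theta$ is nondecreasing and $2\tau\le\pi$, which gives $\theta(x+2\tau)-\theta(x)\le\pi$ via the odd-periodicity. Next, Fubini gives
\[
        \frac1{2\pi}\int_0^{2\pi}d(x)\,\dd x=2\tau .
\]
Using concavity-type comparisons $\sin^2(d/2)\le d/\pi$ for $d\in[0,\pi]$ (since $\sin^2(d/2)$ lies below its chord from $0$ to $\pi$), we get
\[
        J(\tau)\le\frac1{2\pi}\int_0^{2\pi}\frac{d(x)}{\pi}\,\dd x=\frac{2\tau}{\pi}.
\]
One needs $\sin^2(d/2)\le d/\pi$ on $[0,\pi]$. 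This is true: $\sin^2(d/2)-d/\pi$ vanishes at both endpoints and is convex then concave, and checking its derivative sign at $0$ (namely $-1/\pi<0$) shows it is negative in between. If this works, the layer-cake step is unnecessary: multiply $J(\tau)\le 2\tau/\pi$ by $M\ge0$ and integrate, and monotonicity of $M$ is not even needed.

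The main obstacle is justifying $d(x)\le\pi$ for all $x$ and $\tau\le\pi/2$. Monotonicity of $\theta$ gives $\theta(x+\tau)\le\theta(x-\tau+\pi)=\theta(x-\tau)+\pi$ exactly when $2\tau\le\pi$, so this should be fine. Fubini for a monotone (possibly discontinuous) $\theta$ is routine. If the pointwise chord argument unexpectedly failed, I would fall back on the Fourier form and the layer-cake reduction, proving $\sum_k|c_k|^2\bigl(\tfrac s2-\tfrac{\sin 2ks}{4k}\bigr)\le s^2/\pi$ via Hall-type coefficient bounds $|c_k|\le 2/(\pi k)$ for odd monotone maps.
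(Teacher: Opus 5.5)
\textbf{The pointwise step fails.} The function $\psi(d)=\sin^2(d/2)-d/\pi$ vanishes at $0$ and $\pi$ and has $\psi'(0)<0$, but it also vanishes at $d=\pi/2$. On $[\pi/2,\pi]$ it is concave ($\psi''=\tfrac12\cos d\le 0$) with zero endpoint values, so $\psi\ge 0$ there; for example $\sin^2(\pi/3)=3/4>2/3$. The chord bound is therefore only available for $d\le\pi/2$, and windows on which $\theta$ increases by more than $\pi/2$ are not controlled.

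This cannot be patched, because the pointwise bound $J(\tau)\le 2\tau/\pi$ is false. For $\theta(x)=x$ you computed $J(\tau)=\sin^2\tau$, which exceeds $2\tau/\pi$ throughout $(\pi/4,\pi/2)$; for instance $J(\pi/3)=3/4$. Consequently the hypothesis that $M$ is nonincreasing is essential. With $\theta(x)=x$ and $M=\mathbf 1_{[\pi/4,\pi/2]}$, the left side equals $\pi/8+1/4\approx 0.643$ while the right side equals $3\pi/16\approx 0.589$. Your remark that monotonicity of $M$ ``is not even needed'' should have signalled the error.

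\textbf{The fallback does not close the gap either.} The Fourier formula $J(\tau)=\sum_{k\ \mathrm{odd}}|c_k|^2\sin^2(k\tau)$ and the layer-cake reduction to $\int_0^s J\le s^2/\pi$ are correct. However, the bound $|c_k|\le 2/(\pi|k|)$ is false. For the identity, $|c_1|=1$. Even for $|k|\ge 3$, the four-step odd map $F_\square$ in the proof of Theorem~\ref{thm:odd-coeff} has $|c_{-3}|=2\sqrt2/(3\pi)>2/(3\pi)$. You also give no mechanism turning coefficient bounds into the weighted inequality.

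\textbf{The missing idea is Hall's subadditivity-plus-folding argument, which the paper uses.}
\begin{itemize}
\item Superadditivity $\sin^2\frac a2+\sin^2\frac b2\le\sin^2\frac{a+b}2$ holds for $a,b\ge 0$, $a+b\le\pi$. Apply it along a chain of abutting windows $[y_n-\tau_n,y_n+\tau_n]$ of total length $\pi$; on this chain the increments of $\theta$ add up to exactly $\pi$. Averaging in $x$, using the $\pi$-periodicity of each integrand, gives $\sum_n J(\tau_n)\le 1$ whenever $\sum_n\tau_n=\pi/2$.
\item Take $2^r$ copies of $\tau$ and of $B-\tau$ with $B=\pi/2^{r+1}$. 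This yields $L(\tau)+L(B-\tau)\le 0$ for $L(\tau)=J(\tau)-2\tau/\pi$. For the identity map it is the equality $\sin^2\tau+\cos^2\tau=1$: the excess of $J$ at large $\tau$ is paid for by its deficit at small $\tau$.
\item The monotonicity of $M$ enters through the dyadic folding $M_{r+1}(\tau)=M_r(\tau)-M_r(B-\tau)$, which gives $\int ML\le\int M_rL$.
\item Finally, the crude bound $J(\tau)\le\tau$ shows $\int M_rL\to 0$. This bound is the correct use of your Fubini computation, via $\sin^2(u/2)\le u/2$.
\end{itemize}
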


\begin{proof}
This is the odd-period analogue of Hall's Lemmas~2 and~3
in~\cite{Hall1982}. We give the details, since the form needed here is
not stated explicitly there.

First observe that if $\alpha_1,\ldots,\alpha_N\geq 0$ and
\[
        \alpha_1+\cdots+\alpha_N\leq \pi,
\]
then
\[
        \sum_{n=1}^N \sin^2\frac{\alpha_n}{2}\leq 1.
\]
Indeed, whenever $\alpha,\beta\geq 0$ and $\alpha+\beta\leq\pi$,
\[
        \sin^2\frac{\alpha}{2}
        +
        \sin^2\frac{\beta}{2}
        \leq
        \sin^2\frac{\alpha+\beta}{2},
\]
and the assertion follows by repeatedly combining terms.

Now let
\[
        \tau_1+\cdots+\tau_N=\frac{\pi}{2}.
\]
Choose functions $y_n=y_n(x)$ such that
\[
        y_1=x,
        \qquad
        y_n-\tau_n=y_{n-1}+\tau_{n-1},
        \qquad 2\leq n\leq N.
\]
Set
\[
        \alpha_n(x)
        =
        \theta(y_n+\tau_n)-\theta(y_n-\tau_n).
\]
Then
\[
\begin{aligned}
        \sum_{n=1}^N\alpha_n(x)
        &=
        \theta(y_N+\tau_N)-\theta(y_1-\tau_1)  \\
        &=\pi,
\end{aligned}
\]
because
\[
        (y_N+\tau_N)-(y_1-\tau_1)
        =
        2(\tau_1+\cdots+\tau_N)
        =
        \pi
\]
and $\theta(u+\pi)=\theta(u)+\pi$. Hence
\[
        \sum_{n=1}^N
        \sin^2\frac{\alpha_n(x)}2
        \leq 1.
\]
For each fixed $n$, the function
\[
        x\mapsto
        \sin^2\frac{\theta(x+\tau_n)-\theta(x-\tau_n)}2
\]
is $\pi$-periodic. Indeed, by $\theta(x+\pi)=\theta(x)+\pi$,
\[
        \theta(x+\pi+\tau_n)-\theta(x+\pi-\tau_n)
        =
        \theta(x+\tau_n)-\theta(x-\tau_n).
\]
Since $y_n=x+\mathrm{const}$, integration over $0\leq x\leq 2\pi$
therefore gives
\[
        \frac1{2\pi}\int_0^{2\pi}
        \sin^2\frac{\theta(y_n+\tau_n)-\theta(y_n-\tau_n)}2\,\dd x
        =
        J(\tau_n).
\]
Hence, after integrating
\[
        \sum_{n=1}^N
        \sin^2\frac{\alpha_n(x)}2
        \leq 1
\]
over $0\leq x\leq 2\pi$, we obtain
\[
        J(\tau_1)+\cdots+J(\tau_N)\leq 1.
\]

Put
\[
        A=\frac{\pi}{2},
        \qquad
        L(\tau)=J(\tau)-\frac{\tau}{A}
        =
        J(\tau)-\frac{2\tau}{\pi},
\]
and define
\[
        I(M)=\int_0^A M(\tau)L(\tau)\,\dd\tau .
\]
It is enough to prove $I(M)\leq 0$.

For $B=A/2^r$ and $0\leq \tau\leq B/2$, apply the preceding estimate to
$2^r$ copies of $\tau$ and $2^r$ copies of $B-\tau$. Since their total
sum is $A$, we get
\[
        2^r\{J(\tau)+J(B-\tau)\}\leq 1.
\]
Equivalently,
\[
        L(\tau)+L(B-\tau)\leq 0.
\]

We now use Hall's dyadic folding argument. Let $M_0=M$. Having defined
a nonnegative nonincreasing function $M_r$ supported on $[0,A/2^r]$, set
\[
        M_{r+1}(\tau)
        =
        \begin{cases}
        M_r(\tau)-M_r(A/2^r-\tau),
        &0\leq \tau\leq A/2^{r+1},\\[4pt]
        0,
        &A/2^{r+1}<\tau\leq A.
        \end{cases}
\]
Then $M_{r+1}$ is again nonnegative and nonincreasing, and the preceding
two-point inequality gives
\[
        I(M_r)\leq I(M_{r+1})
        \qquad r=0,1,2,\ldots .
\]
Hence
\[
        I(M)\leq I(M_r)
        \qquad r=0,1,2,\ldots .
\]

It remains to let $r\to\infty$. Since
\[
        \sin^2\frac{u}{2}\leq \frac{u}{2},
        \qquad u\geq 0,
\]
we have
\[
        J(\tau)
        \leq
        \frac12\cdot
        \frac1{2\pi}\int_0^{2\pi}
        \bigl(\theta(x+\tau)-\theta(x-\tau)\bigr)\,\dd x
        =
        \tau .
\]
Here we used
\[
\int_0^{2\pi}\bigl(\theta(x+\tau)-\theta(x-\tau)\bigr)\,dx=4\pi\tau,
\]
which follows from \(\theta(x+2\pi)=\theta(x)+2\pi\).
Hence, by changes of variables,
\[
\begin{aligned}
\int_0^{2\pi}\!\bigl(\theta(x+\tau)-\theta(x-\tau)\bigr)\,\dd x
&=
\int_\tau^{2\pi+\tau}\theta(u)\,\dd u
-
\int_{-\tau}^{2\pi-\tau}\theta(u)\,\dd u        \\
&=
\int_{2\pi-\tau}^{2\pi+\tau}\theta(u)\,\dd u
-
\int_{-\tau}^{\tau}\theta(u)\,\dd u             \\
&=
\int_{-\tau}^{\tau}\theta(v+2\pi)\,\dd v
-
\int_{-\tau}^{\tau}\theta(v)\,\dd v             \\
&=
\int_{-\tau}^{\tau}\bigl(\theta(v)+2\pi\bigr)\,\dd v
-
\int_{-\tau}^{\tau}\theta(v)\,\dd v             \\
&=4\pi\tau .
\end{aligned}
\]
Therefore
\[
        \frac12\cdot
        \frac1{2\pi}\int_0^{2\pi}
        \bigl(\theta(x+\tau)-\theta(x-\tau)\bigr)\,\dd x
        =
        \frac12\cdot\frac{4\pi\tau}{2\pi}
        =
        \tau .
\]
Thus
\[
        I(M_r)
        \leq
        \int_0^{A/2^r} M_r(\tau)J(\tau)\,\dd\tau
        \leq
        \int_0^{A/2^r} M(\tau)\tau\,\dd\tau .
\]
If the right-hand side in the desired inequality is infinite, there is
nothing to prove. Otherwise the last integral tends to $0$ as
$r\to\infty$. Therefore $I(M)\leq 0$, that is,
\[
        \int_0^{\pi/2} M(\tau)J(\tau)\,\dd\tau
        \leq
        \frac2\pi
        \int_0^{\pi/2} M(\tau)\tau\,\dd\tau .
\]
\end{proof}

\begin{theorem}[Odd boundary coefficient estimate]\label{thm:odd-coeff}
Let
\[
        f=h+\overline g
\]
be a sense-preserving harmonic diffeomorphism of $\D$ onto $\D$, with
\[
        h(z)=\sum_{n=0}^\infty a_nz^n,
        \qquad
        g(z)=\sum_{n=1}^\infty b_nz^n .
\]
Assume that $f$ extends to an orientation-preserving homeomorphism of
$\overline\D$ and that its boundary values are odd:
\[
        f(-\zeta)=-f(\zeta),
        \qquad |\zeta|=1 .
\]
Then
\[
        |a_1|^2+|b_1|^2\geq \frac{8}{\pi^2}.
\]
The constant is sharp.
\end{theorem}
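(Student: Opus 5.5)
Let \(f^*\) be the boundary function, so \(f^*(e^{\ii x})=e^{\ii\theta(x)}\) with \(\theta\) nondecreasing and \(\theta(x+2\pi)=\theta(x)+2\pi\).  Oddness of the boundary values gives \(\theta(x+\pi)=\theta(x)+\pi\), which is the hypothesis of Lemma~\ref{lem:odd-Hall}.  Since \(f\) is the Poisson extension of \(f^*\), the coefficients are Fourier coefficients:
\[
        a_1=\frac1{2\pi}\int_0^{2\pi}e^{\ii\theta(x)}e^{-\ii x}\,\dd x,
        \qquad
        \overline{b_1}=\frac1{2\pi}\int_0^{2\pi}e^{\ii\theta(x)}e^{\ii x}\,\dd x .
\]
Hence \(|a_1|^2+|b_1|^2\) equals twice the average of
\[
        \bigl|\widehat{F}(1)\bigr|^2+\bigl|\widehat{F}(-1)\bigr|^2,
        \qquad F=e^{\ii\theta}.
\]
Expanding these squares as double integrals gives
\[
        |a_1|^2+|b_1|^2
        =\frac1{4\pi^2}\iint \cos\bigl(\theta(x)-\theta(y)\bigr)\,2\cos(x-y)\,\dd x\,\dd y .
\]
Substitute \(x=s+\tau\), \(y=s-\tau\) and write \(\cos u=1-2\sin^2(u/2)\).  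Then the right-hand side becomes
\[
        \int \bigl(1-2J(\tau)\bigr)\,m(\tau)\,\dd\tau,
\]
where \(m(\tau)\) is an explicit kernel built from \(\cos 2\tau\).

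I will fold the \(\tau\)-integral over \([0,\pi]\) onto \([0,\pi/2]\) using \(\pi\)-periodicity.  Oddness makes \(J(\pi-\tau)=1-J(\tau)\): shifting by \(\pi\) adds \(\pi\) to \(\theta\), which turns \(\sin^2\) into \(\cos^2\).  After folding, the quantity should take the form
\[
        |a_1|^2+|b_1|^2 = c\int_0^{\pi/2}\cos 2\tau\,\bigl(1-2J(\tau)\bigr)\,\dd\tau
\]
for an explicit constant \(c\), after checking the normalization.  The weight \(\cos2\tau\) changes sign at \(\pi/4\), so it is not nonincreasing and nonnegative as Lemma~\ref{lem:odd-Hall} requires.

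The fix is to integrate by parts, or equivalently to pair \(\tau\) with \(\pi/2-\tau\), and rewrite the integral against the nonincreasing nonnegative weight \(M(\tau)=\sin(\pi/2-2\tau)^+\)-type terms or \(1-\sin 2\tau\).  An alternative is to use Hall's original form, where \(M(\tau)=\cos 2\tau\) is restricted to \([0,\pi/4]\) and the remainder is controlled by the symmetry \(J(\pi/2-\tau)\) bounds.  Once the weight is nonincreasing and nonnegative, Lemma~\ref{lem:odd-Hall} gives \(\int MJ\le\frac2\pi\int M\tau\).  Replacing \(J(\tau)\) by the extremal \(2\tau/\pi\) then yields
\[
        |a_1|^2+|b_1|^2\ge c\int_0^{\pi/2}\cos2\tau\Bigl(1-\frac{4\tau}{\pi}\Bigr)\dd\tau .
\]
The integral \(\int_0^{\pi/2}\cos 2\tau(1-4\tau/\pi)\,\dd\tau=2/\pi\) is elementary.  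The resulting bound should come out to \(8/\pi^2\), and I will verify the constant \(c\) at this point.

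For sharpness I take \(\theta\) to be the step function with jumps of size \(\pi/2\) at four equally spaced points, which is odd.  This is the limit of harmonic diffeomorphisms onto disks, with boundary values approximating the square's vertex map.  Its first Fourier coefficient gives
\[
        |a_1|=\frac{2\sqrt2}{\pi},\qquad b_1=0,
\]
so \(|a_1|^2+|b_1|^2=8/\pi^2\).

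The main obstacle is the reduction of the weight to a nonincreasing nonnegative \(M\).  This rewriting must be exact, and the folding identity \(J(\pi-\tau)=1-J(\tau)\) must be applied with correct signs.  I would first try the pairing approach, since it is the step where the constant could be lost.
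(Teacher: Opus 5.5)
Your route is the paper's: the autocorrelation representation, which you rederive from the double integral as $|a_1|^2+|b_1|^2=\frac2\pi\int_0^\pi C(\tau)\cos2\tau\,\dd\tau$ with $C=1-2J$; then Lemma~\ref{lem:odd-Hall} with a weight supported on $[0,\pi/4]$; then the four-point collapse for sharpness. But the step you flag as the crux is stated incorrectly, and as written it fails.

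\textbf{The symmetry you use is false.} The identity $J(\pi-\tau)=1-J(\tau)$ does not hold. Put $s=s'+\pi$ to get $\theta(s+\pi-\tau)-\theta(s-\pi+\tau)=2\pi-\bigl[\theta(s'+\tau)-\theta(s'-\tau)\bigr]$. Hence $J(\pi-\tau)=J(\tau)$ for every degree-one lift, without using oddness. This identity is what legitimately folds $[0,\pi]$ onto $[0,\pi/2]$ and gives $c=4/\pi$. If your identity were true, the two halves of $\int_0^\pi\cos2\tau\,(1-2J)\,\dd\tau$ would cancel, and you would conclude $|a_1|^2+|b_1|^2=0$.

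\textbf{The symmetry oddness actually gives.} It is the quarter-period relation. Put $s=s'+\pi/2$ and use $\theta(u+\pi)=\theta(u)+\pi$ to get $\theta(s+\pi/2-\tau)-\theta(s-\pi/2+\tau)=\pi-\bigl[\theta(s'+\tau)-\theta(s'-\tau)\bigr]$. Hence $J(\pi/2-\tau)=1-J(\tau)$, i.e.\ $C(\pi/2-\tau)=-C(\tau)$.

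\textbf{How the proof then closes.} This exact antisymmetry, together with $\cos2(\pi/2-\tau)=-\cos2\tau$, makes $\cos2\tau\,(1-2J(\tau))$ symmetric about $\pi/4$. Therefore
\[
|a_1|^2+|b_1|^2=\frac8\pi\int_0^{\pi/4}\cos2\tau\,\bigl(1-2J(\tau)\bigr)\,\dd\tau ,
\]
with no remainder left to ``control by bounds''. Lemma~\ref{lem:odd-Hall} now applies to $M=\cos2\tau\cdot\mathbf 1_{[0,\pi/4]}$, which is your $\sin(\pi/2-2\tau)^+$. It gives $\int_0^{\pi/4}J\cos2\tau\,\dd\tau\le\frac2\pi\bigl(\frac\pi8-\frac14\bigr)$. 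Hence $|a_1|^2+|b_1|^2\ge\frac4\pi-\frac{32}{\pi^2}\bigl(\frac\pi8-\frac14\bigr)=\frac8{\pi^2}$, which is precisely the paper's argument.

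\textbf{Options to drop, and the sharpness step.} The weight $1-\sin2\tau$ increases on $[\pi/4,\pi/2]$, so the lemma does not apply to it. Integration by parts would move a derivative onto $J$, about which the lemma says nothing. For sharpness, the approximating boundary homeomorphisms must be odd so that they satisfy the hypotheses. You also need continuity of $\hat F(\pm1)$ under $L^1$ convergence, as in the paper.
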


\begin{proof}
Write the boundary values as
\[
        F(e^{\ii t})=f(e^{\ii t})=e^{\ii\theta(t)},
\]
where $\theta$ is a nondecreasing lift satisfying
\[
        \theta(t+2\pi)=\theta(t)+2\pi .
\]
Since the boundary map is odd, the lift may be chosen so that
\[
        \theta(t+\pi)=\theta(t)+\pi .
\]

Let
\[
        c_n=\frac1{2\pi}\int_0^{2\pi}
        F(e^{\ii t})e^{-\ii nt}\,\dd t
\]
be the Fourier coefficients of $F$. Since $F$ is odd, all even Fourier
coefficients vanish. Moreover, from the Poisson representation of
$f=h+\overline g$,
\[
        c_1=a_1,
        \qquad
        c_{-1}=\overline{b_1}.
\]
For $n\geq 1$, set
\[
        S_n=|c_n|^2+|c_{-n}|^2 .
\]

Hall's autocorrelation identity gives
\[
        C(t):=\frac1{2\pi}\int_0^{2\pi}
        \cos\bigl(\theta(s+t)-\theta(s-t)\bigr)\,\dd s
        =
        \sum_{n=1}^{\infty} S_n\cos(2nt).
\]
Since only odd Fourier modes occur,
\[
        C(t)=\sum_{k=0}^{\infty}
        S_{2k+1}\cos\bigl(2(2k+1)t\bigr).
\]
Therefore, by orthogonality on $[0,\pi/2]$,
\[
        S_1=
        \frac4\pi\int_0^{\pi/2} C(t)\cos(2t)\,\dd t .
\]
The odd-mode expansion also gives
\[
        C(\pi/2-t)=-C(t).
\]
Hence
\[
        S_1=
        \frac8\pi\int_0^{\pi/4} C(t)\cos(2t)\,\dd t .
\]

Define
\[
        J(t)=
        \frac1{2\pi}\int_0^{2\pi}
        \sin^2\frac{\theta(s+t)-\theta(s-t)}2\,\dd s .
\]
Then
\[
        C(t)=1-2J(t).
\]
Consequently,
\[
        S_1
        =
        \frac8\pi\int_0^{\pi/4}
        \bigl(1-2J(t)\bigr)\cos(2t)\,\dd t .
\]

Apply Lemma~\ref{lem:odd-Hall} with
\[
        M(t)=
        \begin{cases}
        \cos(2t), & 0\leq t\leq \pi/4,\\
        0, & \pi/4<t\leq \pi/2.
        \end{cases}
\]
This $M$ is nonnegative and nonincreasing on $[0,\pi/2]$. Hence
\[
        \int_0^{\pi/4} J(t)\cos(2t)\,\dd t
        \leq
        \frac2\pi\int_0^{\pi/4} t\cos(2t)\,\dd t .
\]
Therefore
\[
\begin{aligned}
        S_1
        &\geq
        \frac8\pi\int_0^{\pi/4}\cos(2t)\,\dd t
        -
        \frac{16}{\pi}\cdot\frac2\pi
        \int_0^{\pi/4}t\cos(2t)\,\dd t  \\
        &=
        \frac8{\pi^2}.
\end{aligned}
\]
Because
\[
        S_1=|c_1|^2+|c_{-1}|^2
        =|a_1|^2+|b_1|^2,
\]
the desired estimate follows.

It remains to show sharpness. Consider the four-point boundary collapse
\[
        F_\square(e^{\ii t})=\ii^k,
        \qquad
        \frac{k\pi}{2}\leq t<\frac{(k+1)\pi}{2},
        \qquad k=0,1,2,3.
\]
This map is not itself a boundary homeomorphism, but it is the
$L^1(\partial\D)$-limit of odd orientation-preserving circle
homeomorphisms. By the Radó--Kneser--Choquet theorem, the Poisson
extensions of these approximating homeomorphisms are sense-preserving
harmonic diffeomorphisms of $\D$ onto $\D$.

The Poisson extension of $F_\square$ has, up to rotation,
\[
        h'(z)=\frac{2(1-\ii)}{\pi}\frac1{1-z^4},
        \qquad
        g'(z)=-\frac{2(1+\ii)}{\pi}\frac{z^2}{1-z^4}.
\]
Thus
\[
        a_1=h'(0)=\frac{2(1-\ii)}{\pi},
        \qquad
        b_1=g'(0)=0.
\]
Consequently,
\[
        |a_1|^2+|b_1|^2
        =
        \left|\frac{2(1-\ii)}{\pi}\right|^2
        =
        \frac8{\pi^2}.
\]
The approximating odd boundary homeomorphisms therefore show that the
constant $8/\pi^2$ cannot be improved.
\end{proof}

\begin{theorem}[Centrally symmetric graphs, boundary-regular form]\label{thm:central}
Let
\[
        S=\{(u,v,U(u,v)):(u,v)\in\D\}
\]
be a non-parametric minimal graph, and let $\xi=(0,0,U(0,0))$. Assume
\[
        U(-u,-v)=2U(0,0)-U(u,v).
\]
Assume in addition that the horizontal harmonic projection in a conformal parametrization extends to an odd orientation-preserving boundary homeomorphism of $\overline\D$. Then
\[
        W(\xi)^2|K(\xi)|\leq \frac{\pi^2}{2}.
\]
\end{theorem}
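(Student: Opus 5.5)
We reduce Theorem~\ref{thm:central} to the coefficient estimate of Theorem~\ref{thm:odd-coeff} via the Weierstrass representation at the parameter point corresponding to $\xi$. After translating vertically we may assume $U(0,0)=0$, so $U$ is odd and $S$ is symmetric under the rotation $\rho(x_1,x_2,x_3)=(-x_1,-x_2,-x_3)$ about the vertical line. Choose a conformal parametrization $X:\D\to S$ with $X(0)=\xi$ whose horizontal projection $f=h+\overline g$ is the given harmonic diffeomorphism onto $\D$. Since $\rho$ is an orientation-reversing isometry of $\mathbb R^3$ that preserves $S$ and the upward orientation, $\rho\circ X(-z)$ is again a conformal parametrization fixing $\xi$. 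By uniqueness of conformal parametrizations up to disk automorphisms, we may arrange $X(-z)=\rho X(z)$; the hypothesis that the boundary values are odd makes this choice consistent. Hence $f(-z)=-f(z)$, and $f$ satisfies all the hypotheses of Theorem~\ref{thm:odd-coeff}, so $|a_1|^2+|b_1|^2\ge 8/\pi^2$.

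Next we compute $W^2|K|$ at $\xi$ in terms of $a_1,b_1$. The dilatation is $\omega=g'/h'$, and for a graph over $\D$ the Gauss map is $G$ with $G^2=-\omega$ (up to a fixed rotation convention). We have $W=(1+|G|^2)/(1-|G|^2)$, and the standard formula
\[
|K|=\frac{|\omega'|^2}{|h'|^2|g'|\,(1+|\omega|)^4}
\]
holds, that is, $|K|=4|G'|^2/\bigl(|h'|^2(1+|G|^2)^4\bigr)$. Using the metric $|h'|(1+|\omega|)$, we get
\[
W^2|K|=\frac{4|G'|^2}{|h'|^2(1-|G|^2)^2(1+|G|^2)^2}.
\]
Because $f$ is odd, $h'$ and $g'$ are even, so $\omega$ is even and $\omega'(0)=0$. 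This by itself does not kill $G'(0)$ when $\omega(0)\ne0$, since $G=\sqrt{-\omega}$ is then analytic with $G'(0)=0$. It does, however, show that a pure formula in $a_1,b_1$ is not the right route. The correct quantity is the curvature of the graph itself, namely the Hessian of $U$ at the origin. We have $U=\operatorname{Re}\int 2G\varphi$, with height differential $2G h'$ up to constants.

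We therefore redo the computation intrinsically. Compose with a disk automorphism so that $\omega(0)=0$. This breaks oddness unless $\omega(0)=0$ already, and that is the main obstacle. We handle it by instead composing with an affine map of the target, namely a shear that makes the tangent plane horizontal. That does not preserve minimality, so we abandon this route as well and use the following scheme. Write $W^2|K|(\xi)=|\det \mathrm{Hess}\,U(0)|$ in suitable coordinates, and express $\mathrm{Hess}\,U(0)$ through $h''(0)$-type data. By oddness, $h''(0)=g''(0)=0$, so the curvature comes from $\omega''(0)$. Then bound $|\omega''(0)|$ (equivalently $|G'(0)|$ after the square root, taking $\omega(0)=0$ in the case where this holds) by the Schwarz--Pick lemma applied to the even self-map $\omega$, which gives $|\omega''(0)|\le 2(1-|\omega(0)|^2)$. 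Combining this with $|h'(0)|^2(1-|\omega(0)|^2)\ge|a_1|^2-|b_1|^2$ and the estimate $|a_1|^2+|b_1|^2\ge8/\pi^2$ should produce the factor $\pi^2/2$.

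The delicate step is exactly this algebraic matching. The odd coefficient theorem controls $|a_1|^2+|b_1|^2$, whereas the curvature naturally involves $|a_1|^2-|b_1|^2$ together with the $W$-factor $(1+|b_1/a_1|)^2/(1-|b_1/a_1|)^2$. We expect the $W^2$ normalization to convert $|a_1|^2-|b_1|^2$ exactly into $|a_1|^2+|b_1|^2$ once one uses $|G(0)|^2=|b_1/a_1|$. We would verify this identity first. The sharpness check is the Scherk square, where $b_1=0$.
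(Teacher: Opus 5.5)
There is a genuine gap: the argument is never closed. You get to exactly where the paper's proof stands, namely the bound $|a_1|^2+|b_1|^2\ge 8/\pi^2$ and the formula $W^2|K|=4|G'|^2/\bigl(|h'|^2(1-|G|^2)^2(1+|G|^2)^2\bigr)$. Then you abandon the direct route and end by hoping for an ``exact identity'' that turns $|a_1|^2-|b_1|^2$ into $|a_1|^2+|b_1|^2$. No such identity exists, and none is needed.

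The missing step is Schwarz--Pick for the Gauss map $G$ itself, not for $\omega$: $|G'(0)|\le 1-|G(0)|^2$. This cancels precisely the factor $(1-|G(0)|^2)^2$ contributed by $W^2$. With $r=|G(0)|$, $a_1=h'(0)$ and $|b_1|=r^2|a_1|$, it gives
\[
W(\xi)^2|K(\xi)|\le\frac{4}{|a_1|^2(1+r^2)^2}\le\frac{4}{|a_1|^2(1+r^4)}=\frac{4}{|a_1|^2+|b_1|^2}\le\frac{\pi^2}{2}.
\]
So once Schwarz--Pick has absorbed the slope factor, the passage from one coefficient combination to the other is just the elementary inequality $1+r^4\le(1+r^2)^2$. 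This is the paper's proof.

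You also misread your own observation. If $\omega(0)\ne0$, then evenness of $\omega$ (a constant multiple of $G^2$) forces $G$ to be even. Hence $G'(0)=0$ and $K(\xi)=0$, which disposes of that case rather than obstructing it. If $\omega(0)=0$, then $W(\xi)=1$ and $b_1=0$, and your bound $|\omega''(0)|\le 2$ gives $|G'(0)|^2=|\omega''(0)|/2\le 1$. Hence $W^2|K|\le 4/|a_1|^2\le \pi^2/2$. So a two-case argument with the tools you already had would have worked.

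Further corrections:
\begin{itemize}
\item $W^2|K|=|\det\mathrm{Hess}\,U|/W^2$, not $|\det\mathrm{Hess}\,U|$.
\item $\rho=-\mathrm{id}$ is the point reflection through $\xi$, not a rotation about the vertical line.
\item Once you normalize $U(0,0)=0$ with $U$ odd, every second derivative of $U$ vanishes at the origin. So $K(\xi)=0$ directly under the stated symmetry hypothesis.
\end{itemize}
The paper's argument does not use that hypothesis at all, only the odd boundary values. It therefore also covers graphs invariant under the half-turn about the vertical axis through $\xi$ (even $U$, $W(\xi)=1$), which is where the estimate actually has content.
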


\begin{proof}
Choose conformal parameters $X:\D\to S$, $X(0)=\xi$, and write
\[
        X(z)=(\operatorname{Re} f(z),\operatorname{Im} f(z),T(z)),
        \qquad
        f=h+\overline g.
\]
By the assumed odd boundary extension and Theorem~\ref{thm:odd-coeff},
\[
        |a_1|^2+|b_1|^2\geq \frac8{\pi^2}.
\]
For a minimal graph, the Enneper--Weierstrass data can be written as
\[
        P=h',
        \qquad
        \omega=\frac{g'}{h'}=q^2,
\]
where $q:\D\to\D$ is analytic. Thus
\[
        g'=q^2P.
\]
The curvature and slope factor are
\[
        K=-\frac{4|q'|^2}{|P|^2(1+|q|^2)^4},
        \qquad
        W=\frac{1+|q|^2}{1-|q|^2}.
\]
Put $r=|q(0)|$. Then
\[
        W(0)^2|K(0)|
        =\frac{4|q'(0)|^2}{|P(0)|^2(1-r^2)^2(1+r^2)^2}.
\]
By Schwarz--Pick,
\[
        |q'(0)|\leq 1-r^2.
\]
Therefore
\[
        W(0)^2|K(0)|
        \leq \frac{4}{|P(0)|^2(1+r^2)^2}.
\]
Now
\[
        a_1=P(0),
        \qquad
        b_1=q(0)^2P(0),
\]
so
\[
        |a_1|^2+|b_1|^2=|P(0)|^2(1+r^4)
        \leq |P(0)|^2(1+r^2)^2.
\]
Using the coefficient estimate,
\[
        |P(0)|^2(1+r^2)^2\geq \frac8{\pi^2}.
\]
Substituting gives
\[
        W(0)^2|K(0)|\leq \frac4{8/\pi^2}=\frac{\pi^2}{2}.
\]
\end{proof}

\section{The lower bound in the Scherk family}\label{app:lower}

The derivative form gives a convenient expression for the normalized curvature. Let
\[
\begin{aligned}
        S_{A,B}(U)
        &=(A+B)\sin(\pi U)
        +B\kappa\sin(\pi M(U))
        +A\eps\sin(\pi N(U)).
\end{aligned}
\]
Then the Scherk-family identity can be written as
\[
        W^2|K|=\frac{\pi^2(1+AB)}{S_{A,B}(U)^2}.
\]
The scalar theorem says $S_{A,B}(U)\geq \sqrt{2(1+AB)}$, which gives the upper bound. The following estimate gives the lower bound.

\begin{proposition}[Lower bound]\label{prop:lower}
For every admissible Scherk-type comparison graph,
\[
        W^2|K|\geq \frac{\pi^2}{4}.
\]
Consequently,
\[
        \frac{\pi^2}{4}\leq W^2|K|\leq \frac{\pi^2}{2}
\]
throughout the admissible Scherk-type family.
\end{proposition}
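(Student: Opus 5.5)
Since \(W^2|K|=\pi^2(1+AB)/S_{A,B}(U)^2\), the lower bound \(W^2|K|\ge\pi^2/4\) is equivalent to
\[
        S_{A,B}(U)\le 2\sqrt{1+AB}
\]
at the admissible zero \(U\). So the plan is to bound \(S\) from above. The natural tool is the Cauchy--Schwarz inequality, combined with the zero equation \(G_{A,B}(U)=0\). Write \(s_0=\sin(\pi U)\), \(s_1=\sin(\pi M)\), \(s_2=\sin(\pi N)\), and let \(c_0,c_1,c_2\) be the corresponding cosines. Then
\[
        S=(A+B)s_0+B\kappa s_1+A\eps s_2 .
\]

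\textbf{First step.} Group the terms as \(S=B(s_0+\kappa s_1)+A(s_0+\eps s_2)\). Cauchy--Schwarz gives \(s_0+\kappa s_1\le\sqrt{1+\kappa^2}\sqrt{s_0^2+s_1^2}\), and similarly for the second bracket. This route does not use the zero equation, however, so it is probably too crude.

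\textbf{Better route.} Use the zero equation in the form
\[
        B(c_1-c_0)=A(c_2+c_0).
\]
Recall from Lemma~\ref{lem:hm-crossratio} that this came from
\[
        \frac{\sin\frac{\pi}{2}(U+M)\,\sin\frac{\pi}{2}(U-M)}{\sin\frac{\pi}{2}(1-U-N)\,\sin\frac{\pi}{2}(1-U+N)}=\frac AB .
\]
Introduce the half-angle quantities \(\Omega_j\), which are nonnegative and sum to \(1\). In these variables, \(S\) should become a combination of \(\sin(\pi\Omega_j)\)-type terms. The linear relations \(V=c_p(P-U)\) and \(T=-d_q(\cdot)\) suggest that the weights \(B\kappa\) and \(A\eps\) combine with \((A+B)\) into a quantity of the form \(\sqrt{1+AB}\times\) (a sum of products of sines). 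That sum is at most \(2\) by an elementary trigonometric bound. Concretely, I would try to prove the identity
\[
        S^2+\bigl(\text{a nonnegative square built from }c_0,c_1,c_2\bigr)=4(1+AB),
\]
mirroring how the exact phase elimination in Lemma~\ref{lem:derivative-master} turned \(\Dc/(1+r^2)\) into \(S/(A+B)\).

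\textbf{Geometric shortcut.} A cleaner and more likely successful approach returns to the master form
\[
        \frac{S}{A+B}=\sin(\pi U)\,\frac{\Dc}{1+r^2}.
\]
Here \(\sin(\pi U)\le 1\), and
\[
        \frac{\Dc}{1+r^2}\le 1+\sqrt{1-\mu^2}\,\frac{2r}{1+r^2}\le 1+\sqrt{1-\mu^2}.
\]
This yields \(S\le(A+B)(1+\sqrt{1-AB})\). It then remains to check the elementary inequality
\[
        (A+B)\bigl(1+\sqrt{1-AB}\bigr)\le 2\sqrt{1+AB},
\]
but this fails near \(A=B=1\) only if it is tight there. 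At \(A=B=1\) both sides equal \(2\sqrt2\) versus \(2\)? No: the left side is \(2\) and the right side is \(2\sqrt2\), so the inequality holds. Near small \(AB\), however, the left side is about \(2(A+B)\), which can exceed \(2\) when \(A+B>1\). So the crude bound is insufficient in general.

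\textbf{Refinement and main obstacle.} I would couple the two factors. When \(r\) is large, \(\sin(\pi U)\) is small via the harmonic-measure formula
\[
        \sin(\pi U)=\frac{(1-r^4)\sin\alpha}{|1-\zc^2|\,|e^{2\ii\alpha}-\zc^2|}.
\]
This suggests maximizing \(\sin(\pi U)(1+\sqrt{1-\mu^2}\,\rho)\) over \(\rho=2r/(1+r^2)\). The main obstacle is establishing this coupled bound. To handle it, I would first express everything through the variables \((U,M,N)\) and their explicit linear dependence on \(U\), and then verify the resulting one-variable inequality on the interval \([L,R]\). For that verification I would use monotonicity of \(G_{A,B}\) (Lemma~\ref{lem:monotone}) with a barrier point, as in Section~\ref{sec:scalar-proof}, but with reversed inequalities, namely \(\sin(\pi x)\le\pi x\) and upper concavity bounds.
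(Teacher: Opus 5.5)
Your reduction to $S_{A,B}(U)\le 2\sqrt{1+AB}$ is correct, but the proposal never proves it. The first route is abandoned. The ``better route'' identity $S^2+(\cdots)^2=4(1+AB)$ is never written down. The geometric shortcut fails, as you observe, and the coupled barrier argument is only announced. The missing idea is that neither coupling nor the zero equation $G_{A,B}(U)=0$ is needed; your remark that an argument ignoring it is ``probably too crude'' is the misjudgment. The weights $A+B$, $B\kappa$, $A\eps$ are nonnegative, so bounding each sine by $1$ gives $S_{A,B}(U)\le A(1+\eps)+B(1+\kappa)$. From $A^2=1-\kappa^2$ and $B^2=1-\eps^2$ one finds $\bigl(A(1+\eps)+B(1+\kappa)\bigr)^2=2(1+\kappa)(1+\eps)(1+AB-\kappa\eps)$. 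Using also $A^2B^2=(1-\kappa^2)(1-\eps^2)$, this gives
\[
4(1+AB)-\bigl(A+B+B\kappa+A\eps\bigr)^2=(\kappa\eps+\kappa+\eps-1-AB)^2\ge 0 .
\]
Admissibility makes every sine argument lie in $[0,\pi]$, so $S_{A,B}(U)\ge 0$. Hence $S_{A,B}(U)^2\le 4(1+AB)$, i.e.\ $W^2|K|\ge\pi^2/4$. This is the paper's entire proof, and the upper half of the two-sided bound is just Theorem~\ref{thm:scherk-main}. The identity you were looking for in your ``better route'' does exist, but for this maximal value, not for $S$ itself.

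Both of your concrete attempts lose for identifiable reasons. In the Cauchy--Schwarz grouping, $\sqrt{1+\kappa^2}\sqrt{s_0^2+s_1^2}$ can be as large as $\sqrt{2(1+\kappa^2)}\ge 1+\kappa$, so it is weaker than the trivial bound. At $A=1$, $B\to0$ it gives about $2+\sqrt2\,B$ against $2\sqrt{1+B}\approx 2+B$. In the geometric shortcut, the bound $\Dc/(1+r^2)\le 1+\sqrt{1-\mu^2}$ throws away the phase $\delta$ of $a$. By Lemma~\ref{lem:phase-a} and the phase elimination in Lemma~\ref{lem:derivative-master}, $\sin(\pi U)\,\Dc/(1+r^2)$ equals exactly $S/(A+B)$, so the trivial sine bound yields $1+(B\kappa+A\eps)/(A+B)$. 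Cauchy--Schwarz with $B\kappa^2+A\eps^2=(A+B)(1-AB)$ shows this is at most $1+\sqrt{1-AB}$, with equality only when $A=B$. That loss is what makes your inequality $(A+B)(1+\sqrt{1-AB})\le 2\sqrt{1+AB}$ false near $A=1$, $B\to0$, while the sharper bound $1+B+\eps\le 2\sqrt{1+B}$ still holds there.
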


\begin{proof}
By admissibility,
\[
        0\leq U\leq 1,
        \qquad
        0\leq M(U)\leq U,
        \qquad
        0\leq N(U)\leq 1-U.
\]
Thus all sine factors in $S_{A,B}(U)$ are at most $1$, and
\[
        S_{A,B}(U)\leq A+B+B\kappa+A\eps.
\]
We claim that
\[
        A+B+B\kappa+A\eps\leq 2\sqrt{1+AB}.
\]
Indeed, using $A^2+
        \kappa^2=1$ and $B^2+
        \eps^2=1$, one verifies the identity
\[
\begin{aligned}
&4(1+AB)-(A+B+B\kappa+A\eps)^2       \\
&\hspace{2cm}=(\kappa\eps+
        \kappa+
        \eps-1-AB)^2\geq 0.
\end{aligned}
\]
Therefore
\[
        S_{A,B}(U)^2\leq 4(1+AB).
\]
Since
\[
        W^2|K|=\frac{\pi^2(1+AB)}{S_{A,B}(U)^2},
\]
we obtain
\[
        W^2|K|\geq \frac{\pi^2}{4}.
\]
The upper bound is Theorem~\ref{thm:scherk-main}.
\end{proof}

\section{Bernstein-polynomial checks}\label{app:bernstein}

We use the following elementary positivity criterion.

\begin{lemma}[Bernstein positivity]\label{lem:bernstein}
Let $P(t,v)$ be a polynomial on $[0,1]^2$. Suppose that for some bidegree $(m,n)$ it has a Bernstein expansion
\[
        P(t,v)=\sum_{i=0}^m\sum_{j=0}^n
        p_{ij}\binom mi t^i(1-t)^{m-i}\binom nj v^j(1-v)^{n-j}
\]
with all $p_{ij}\geq 0$. Then $P(t,v)\geq 0$ on $[0,1]^2$.
\end{lemma}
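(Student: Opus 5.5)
The plan is to reduce the claim to the elementary fact that a product of nonnegative numbers is nonnegative, applied term by term on the unit square. Each basis function in the expansion has the form
\[
        b_{ij}(t,v)=\binom mi t^i(1-t)^{m-i}\binom nj v^j(1-v)^{n-j}.
\]
For \((t,v)\in[0,1]^2\), the four factors \(t^i\), \((1-t)^{m-i}\), \(v^j\), \((1-v)^{n-j}\) are all nonnegative, because \(t,1-t,v,1-v\in[0,1]\) and the exponents are nonnegative integers (we use the convention \(0^0=1\)). The binomial coefficients are positive integers. Hence \(b_{ij}(t,v)\geq 0\) on the square.

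Since every coefficient satisfies \(p_{ij}\geq0\), each summand \(p_{ij}b_{ij}(t,v)\) is nonnegative. A finite sum of nonnegative terms is nonnegative, so \(P(t,v)\geq0\) on \([0,1]^2\).

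There is no real obstacle in this lemma; its content lies entirely in the hypothesis that such an expansion exists. For context, the way the lemma will be used is the substantive part. For the polynomials \(Y\) and \(Z\) in \(A,B\), I would first substitute \(A=t\), \(B=v\) (or a suitable reparametrization of the admissible region onto the square). I would then convert from the monomial basis to the Bernstein basis using the standard change-of-basis formula
\[
        t^k=\sum_{i\geq k}\frac{\binom ik}{\binom mk}\binom mi t^i(1-t)^{m-i},
\]
and similarly in \(v\), raising the degree if needed until all coefficients are nonnegative. That coefficient check is where the actual work lies, but it belongs to the subsequent lemmas rather than to this one.
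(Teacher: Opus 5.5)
Your argument is correct: each basis function $\binom mi t^i(1-t)^{m-i}\binom nj v^j(1-v)^{n-j}$ is nonnegative on $[0,1]^2$, so a nonnegative combination of them is nonnegative. The paper states this lemma without proof, and your term-by-term argument is the reasoning it takes for granted. Your remark about later use is also harmless: the paper actually substitutes $A=1-t$, $B=1-v$ rather than $A=t$, $B=v$, which does not affect this lemma.
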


\begin{lemma}\label{lem:bernstein-Y}
For $0\leq A,B\leq 1$,
\[
        Y(A,B):=(2+AB-A^2)(2+AB-B^2)-2(A+B)\geq 0.
\]
\end{lemma}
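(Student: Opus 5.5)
The plan is to avoid Bernstein coefficients and reduce $Y$ to a one-variable polynomial in the symmetric functions
\[
s=A+B,\qquad p=AB,
\]
on which nonnegativity can be read off by factorization. (The Bernstein route of Lemma~\ref{lem:bernstein} stays available as a fallback.)

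First I simplify the two factors with $d=A-B$. One has $2+AB-A^2=2-Ad$ and $2+AB-B^2=2+Bd$. Hence
\[
(2-Ad)(2+Bd)=4-2d^2-ABd^2,
\]
and therefore
\[
Y=4-2s-(2+p)d^2 .
\]
Substituting $d^2=s^2-4p$ gives
\[
Y=4-2s-2s^2+8p-ps^2+4p^2 .
\]

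Next I fix $s\in[0,2]$ and regard $Y$ as a function of $p$. Its $p$-derivative is $8-s^2+8p$, which is positive because $s\le 2$. So $Y$ is increasing in $p$, and it suffices to check $Y$ at the smallest value of $p=AB$ compatible with $A+B=s$ and $A,B\in[0,1]$.

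That minimum is attained at the boundary of the square, and there are two cases.
\begin{itemize}
\item If $s\le 1$, the minimum is $p=0$ (one variable is $0$). Then
\[
Y=4-2s-2s^2=2(1-s)(2+s)\ge 0 .
\]
\item If $1\le s\le 2$, the minimum is $p=s-1$ (one variable equals $1$). Substituting and expanding gives
\[
Y=-s^3+3s^2-2s=s(s-1)(2-s)\ge 0 .
\]
\end{itemize}
This proves $Y\ge 0$ on $[0,1]^2$, with equality only at $(0,0)$, at $A=B=1$, and at the corners $(1,0)$ and $(0,1)$.

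The only step needing care is justifying the minimal value of $p$ for fixed $s$. On the segment $A+B=s$ inside the square, $AB$ is concave in $A$, so its minimum lies at an endpoint of the segment. For $s\le1$ that endpoint is $A=0$ or $B=0$, and for $s\ge1$ it is $A=1$ or $B=1$. The rest is routine expansion.
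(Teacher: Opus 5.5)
Your proof is correct, and it takes a different route from the paper.

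The paper substitutes $A=1-t$, $B=1-v$ and writes $Y(1-t,1-v)$ in the bidegree-$(3,3)$ Bernstein basis. It lists the sixteen coefficients, all nonnegative, so positivity follows from Lemma~\ref{lem:bernstein} once that table is checked by expansion.

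You instead use the symmetry $Y(A,B)=Y(B,A)$ to pass to $s=A+B$ and $p=AB$. You get $Y=4-2s-2s^2+8p-ps^2+4p^2$ with $\partial_pY=8-s^2+8p>0$, and then push $p$ down to its minimum on the slice $A+B=s$. This reduces everything to the factorizations $2(1-s)(2+s)$ and $s(s-1)(2-s)$. I checked each of these computations and they are right.

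Your route is easier to verify by hand and shows why the inequality holds: on each slice $A+B=s$ the minimum of $Y$ sits on the boundary of the square. The Bernstein certificate is mechanical and does not depend on symmetry. That is why the paper can reuse the same device for $Z$ in Lemma~\ref{lem:bernstein-Z}. There $C=2+AB-A^2$ breaks the $A\leftrightarrow B$ symmetry (for instance $Z(1,0)=\tfrac12$ but $Z(0,1)=\tfrac52$), so your $(s,p)$ reduction would not carry over directly.

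One correction to your closing remark: $(0,0)$ is not an equality point, since $Y(0,0)=4$. The root $s=0$ of $s(s-1)(2-s)$ lies outside the range $1\le s\le 2$ where you use that formula. On $0\le s\le 1$ the bound $2(1-s)(2+s)$ vanishes only at $s=1$. Since $Y$ is strictly increasing in $p$, equality holds exactly at $(1,1)$, $(1,0)$ and $(0,1)$. This matches the paper's table, where the corner coefficients $y_{00},y_{03},y_{30}$ vanish and $y_{33}=4$. The slip does not affect the lemma itself, which only asserts $Y\ge 0$.
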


\begin{proof}
Put
\[
        A=1-t,
        \qquad
        B=1-v,
        \qquad
        0\leq t,v\leq 1.
\]
Then $Y(1-t,1-v)$ has the Bernstein expansion of bidegree $(3,3)$,
\[
        Y(1-t,1-v)=\sum_{i=0}^3\sum_{j=0}^3
        y_{ij}\binom3i t^i(1-t)^{3-i}\binom3j v^j(1-v)^{3-j},
\]
where
\[
        (y_{ij})=
        \begin{pmatrix}
        0 & 2/3 & 1/3 & 0\\
        2/3 & 2 & 20/9 & 2\\
        1/3 & 20/9 & 28/9 & 10/3\\
        0 & 2 & 10/3 & 4
        \end{pmatrix}.
\]
All coefficients are nonnegative, so $Y\geq 0$ by Lemma~\ref{lem:bernstein}.
\end{proof}

\begin{lemma}\label{lem:bernstein-Z}
Let
\[
        C=2+AB-A^2
\]
and define
\[
        Z(A,B)=C\left[2(C-A-B)+\frac{A+B}{2}(1-AB)\right]
        -\frac52(1-A^2)(1+AB).
\]
Then
\[
        Z(A,B)\geq 0,
        \qquad 0\leq A,B\leq 1.
\]
\end{lemma}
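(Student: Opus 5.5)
The plan is to prove Lemma~\ref{lem:bernstein-Z} exactly as Lemma~\ref{lem:bernstein-Y} was proved: move the corner $A=B=1$ to the origin and read off the sign from Bernstein coefficients via Lemma~\ref{lem:bernstein}. Substitute $A=1-t$, $B=1-v$ with $0\leq t,v\leq 1$.

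\textbf{Bidegree.} $C=2+AB-A^2$ has degree $2$ in $A$ and $1$ in $B$. The bracket $2(C-A-B)+\tfrac12(A+B)(1-AB)$ has degree $2$ in $A$ and $2$ in $B$. The term $\tfrac52(1-A^2)(1+AB)$ has bidegree $(3,1)$. Hence $Z(1-t,1-v)$ is a polynomial of bidegree at most $(4,3)$ in $(t,v)$.

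\textbf{Computation.} I would expand $Z(1-t,1-v)$ in the monomial basis and convert to the Bernstein basis of bidegree $(4,3)$, using
\[
t^k=\sum_{i\geq k}\frac{\binom ik}{\binom 4k}\binom4i t^i(1-t)^{4-i}
\]
and the analogous formula for $v^l$ with $3$ in place of $4$. I would record the resulting $5\times4$ coefficient matrix $(z_{ij})$ explicitly, as was done for $(y_{ij})$.

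\textbf{Sanity checks.} These show which coefficients are delicate.
\begin{itemize}
\item At $A=B=1$ we have $C=2$, the bracket vanishes, and $1-A^2=0$. So $Z(1,1)=0$ and $z_{00}=0$. The corner coefficients must therefore be computed exactly, with no rounding.
\item The coefficients $z_{10}$ and $z_{01}$ are, up to the factors $4$ and $3$, the $t$- and $v$-derivatives of $Z(1-t,1-v)$ at the origin. Both must be nonnegative, and I would verify this first by differentiating $Z$ at $(1,1)$.
\item On the edge $A=0$ one gets $Z=5.5-3B>0$.
\item On the edge $B=0$ one gets $Z=(2-A^2)(4-2A^2-\tfrac32A)-\tfrac52(1-A^2)$, which equals $0.5$ at $A=1$.
\end{itemize}
These checks suggest the boundary coefficients are comfortably positive.

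\textbf{Main obstacle.} The risk is that some interior Bernstein coefficient of bidegree $(4,3)$ is slightly negative, because Bernstein positivity is only sufficient. If that happens, I would first degree-elevate, say to $(5,4)$ or $(6,5)$, which averages coefficients toward the true values. If this still fails, I would subdivide $[0,1]^2$ at $t=\tfrac12$ and/or $v=\tfrac12$ using de Casteljau and check each subsquare separately.

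Near the zero at the corner, subdivision cannot remove the vanishing $z_{00}$. There I would instead rely on $z_{10},z_{01}>0$ together with nonnegativity of the remaining coefficients on the corner subsquare. As a fallback, if the margin is too thin, I would write $Z(1-t,1-v)=t\,Q_1+v\,Q_2$ with explicit polynomials $Q_1,Q_2$ and certify $Q_1,Q_2\geq0$ separately by Lemma~\ref{lem:bernstein}.
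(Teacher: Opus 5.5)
Your route is the paper's: the same substitution $A=1-t$, $B=1-v$ and the same Bernstein criterion (Lemma~\ref{lem:bernstein}). Your bidegree count is correct and slightly sharper than the paper's. The paper certifies $2Z(1-t,1-v)$ in bidegree $(4,4)$, and its matrix is exactly the $v$-degree elevation of the $(4,3)$ expansion you propose.

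What is missing is the computation itself, and that computation is the entire content of the lemma. As written you have a plan with contingencies, not a certificate. The plan does close at its first step. One finds
\[
2Z(1-t,1-v)=(4v-4v^2+v^3)+t(2v+3v^2-2v^3)+t^2(20+v^2+v^3)+t^3(v-20)+t^4(5-v).
\]
Its Bernstein coefficients of bidegree $(4,3)$, with rows $i=0,\dots,4$ in $t$ and columns $j=0,\dots,3$ in $v$, are
\[
(z_{ij})=\begin{pmatrix}
0 & 4/3 & 4/3 & 1\\
0 & 3/2 & 23/12 & 7/4\\
10/3 & 5 & 53/9 & 37/6\\
5 & 83/12 & 101/12 & 19/2\\
5 & 7 & 9 & 11
\end{pmatrix}.
\]
All of these are nonnegative, so no degree elevation or subdivision is needed. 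Elevating in $v$ reproduces the paper's matrix.

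One correction to your heuristics: $z_{10}=0$, not positive. On the edge $B=1$ one has $Z(A,1)=\tfrac52(1-A^2)^2$, which vanishes to second order at $A=1$. By contrast, $Z(1,B)=\tfrac12(1+B)^2(1-B)$ vanishes only to first order at $B=1$. So both $z_{00}$ and $z_{10}$ vanish. Your corner fallback, which relies on ``$z_{10},z_{01}>0$'', therefore rests on a false premise, although it is never invoked. Likewise, the boundary coefficients are not ``comfortably positive'' near that corner.
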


\begin{proof}
Put
\[
        A=1-t,
        \qquad
        B=1-v,
        \qquad
        0\leq t,v\leq 1.
\]
Then $2Z(1-t,1-v)$ has the Bernstein expansion of bidegree $(4,4)$,
\[
        2Z(1-t,1-v)=\sum_{i=0}^4\sum_{j=0}^4
        z_{ij}\binom4i t^i(1-t)^{4-i}\binom4j v^j(1-v)^{4-j},
\]
where
\[
        (z_{ij})=
        \begin{pmatrix}
        0 & 1 & 4/3 & 5/4 & 1\\
        0 & 9/8 & 41/24 & 15/8 & 7/4\\
        10/3 & 55/12 & 49/9 & 143/24 & 37/6\\
        5 & 103/16 & 23/3 & 139/16 & 19/2\\
        5 & 13/2 & 8 & 19/2 & 11
        \end{pmatrix}.
\]
All coefficients are nonnegative, so $Z\geq 0$ by Lemma~\ref{lem:bernstein}.
\end{proof}

\begin{remark}
The displayed Bernstein expansions are polynomial identities. They can be verified directly by expanding both sides in the monomial basis.
\end{remark}

\section{Log-subharmonicity of the normalized curvature}\label{app:log}

Let
\[
        X(z)=\operatorname{Re}\int^z
        \bigl(1-g^2,\ii(1+g^2),2g\bigr)\Phi,
        \qquad
        \Phi=\varphi(z)\,\dd z,
\]
be a conformal minimal immersion. Then
\[
        \dd s^2=(1+|g|^2)^2|\varphi|^2|\dd z|^2,
\]
and
\[
        K=-\frac{4|g'|^2}{|\varphi|^2(1+|g|^2)^4}.
\]
For an upward oriented graph, $|g|<1$, and
\[
        W=\frac{1+|g|^2}{1-|g|^2}.
\]
Consequently,
\[
        W^2|K|=
        \frac{4|g'|^2}{|\varphi|^2(1-|g|^4)^2}.
\]

\begin{lemma}[Log-subharmonicity]\label{lem:log-subharmonicity}
On the set where $g'\neq 0$,
\[
        \Delta\log(W^2|K|)
        =\frac{32|g|^2|g'|^2}{(1-|g|^4)^2}\geq 0.
\]
Thus, $W^2|K|$ is log-subharmonic away from branch points of $g$; across zeros of $g'$ the statement holds in the distributional sense.
\end{lemma}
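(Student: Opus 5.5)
Writing \(W^2|K|=4|g'|^2/(|\varphi|^2(1-|g|^4)^2)\), I would compute \(\Delta\log\) term by term on the open set where \(g'\neq0\), using \(\Delta=4\partial\bar\partial\). The key facts are that \(\log|g'|^2\) and \(\log|\varphi|^2\) are harmonic wherever \(g'\) and \(\varphi\) do not vanish. Here \(\varphi\) has no zeros in the interior of the graph setting: otherwise the metric \((1+|g|^2)^2|\varphi|^2|\dd z|^2\) would degenerate, and poles are excluded since \(|g|<1\). Hence
\[
\Delta\log(W^2|K|)=-2\,\Delta\log(1-|g|^4).
\]

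Next I would compute \(\partial\bar\partial\log(1-|g|^4)\). Put \(s=|g|^2\), so that \(\partial s=g'\bar g\), \(\bar\partial s=g\overline{g'}\) and \(\partial\bar\partial s=|g'|^2\). For \(F(s)=\log(1-s^2)\) we have \(F'=-2s/(1-s^2)\) and \(F''=-2(1+s^2)/(1-s^2)^2\). The chain rule gives
\[
\partial\bar\partial F=F''|\partial s|^2+F'\partial\bar\partial s
=|g'|^2\Bigl[-\tfrac{2s(1+s^2)}{(1-s^2)^2}-\tfrac{2s}{1-s^2}\Bigr]
=-\frac{4s|g'|^2}{(1-s^2)^2}.
\]
Multiplying by \(-2\cdot4\) yields \(32|g|^2|g'|^2/(1-|g|^4)^2\), which is the claimed formula, and it is manifestly nonnegative. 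I would double-check the factor from \(|\partial s|^2=s|g'|^2\) here.

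For the distributional statement across zeros of \(g'\), note that near a zero of order \(m\) we have \(\log|g'|^2=2m\log|z-z_0|+\text{harmonic}\). This is subharmonic, with Laplacian \(4\pi m\delta_{z_0}\geq0\). The remaining terms are smooth. So \(\log(W^2|K|)\) is locally integrable (it equals \(-\infty\) only at isolated points), and its distributional Laplacian is the nonnegative smooth density plus positive point masses. Hence it is subharmonic on the whole domain.

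The only delicate point is justifying that \(\varphi\) is zero-free, equivalently that \(\log|\varphi|\) is harmonic. I would argue this from the regularity of the graph, namely \(h'\neq0\) for the sense-preserving harmonic projection. The rest is routine calculus.
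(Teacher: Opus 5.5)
Your proposal is correct and follows the same route as the paper: you split off the term $2\log|g'/\varphi|$, which is harmonic away from zeros of $g'$, compute $\Delta[-2\log(1-|g|^4)]$, and use the subharmonicity of $\log|g'|$ across its zeros. Your $\partial\bar\partial$ chain-rule computation, including $|\partial s|^2=s|g'|^2$, is right. You also make explicit a point the paper leaves implicit, namely that $\varphi$ is zero-free; this follows from nondegeneracy of the induced metric, or equivalently $h'\neq 0$ for the harmonic projection.
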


\begin{proof}
We have
\[
        \log(W^2|K|)
        =\log4+2\log\left|\frac{g'}{\varphi}\right|
        -2\log(1-|g|^4).
\]
Away from zeros of $g'$, the middle term is harmonic. A direct radial calculation gives
\[
        \Delta\bigl[-2\log(1-|g|^4)\bigr]
        =\frac{32|g|^2|g'|^2}{(1-|g|^4)^2}.
\]
The extension across zeros of $g'$ follows from the distributional subharmonicity of $\log|g'|$.
\end{proof}

\begin{remark}
By contrast,
\[
        \Delta\log|K|
        =-
        \frac{16|g'|^2}{(1+|g|^2)^2}
        \leq 0
\]
away from zeros of $g'$. Thus, $|K|$ is log-superharmonic in the Gauss-map variable, whereas $W^2|K|$ is log-subharmonic. This observation is useful conceptually, but it does not replace the zero equation $f(\zc)=0$, because $\zc$ itself depends on the Scherk parameters.
\end{remark}

\end{document}